\newif\ifIdiotsGuide
\newif\ifnotIdiotsGuide
\def\bbordermatrix#1{\begingroup \m@th
  \@tempdima 4.75\p@
  \setbox\z@\vbox{%
    \def\cr{\crcr\noalign{\kern2\p@\global\let\cr\endline}}%
    \ialign{$##$\hfil\kern2\p@\kern\@tempdima&\thinspace\hfil$##$\hfil
      &&\quad\hfil$##$\hfil\crcr
      \omit\strut\hfil\crcr\noalign{\kern-\baselineskip}%
      #1\crcr\omit\strut\cr}}%
  \setbox\tw@\vbox{\unvcopy\z@\global\setbox\@ne\lastbox}%
  \setbox\tw@\hbox{\unhbox\@ne\unskip\global\setbox\@ne\lastbox}%
  \setbox\tw@\hbox{$\kern\wd\@ne\kern-\@tempdima\left[\kern-\wd\@ne
    \global\setbox\@ne\vbox{\box\@ne\kern2\p@}%
    \vcenter{\kern-\ht\@ne\unvbox\z@\kern-\baselineskip}\,\right]$}%
  \null\;\vbox{\kern\ht\@ne\box\tw@}\endgroup}
\def\VR{\kern-\arraycolsep\strut\vrule &\kern-\arraycolsep}
\def\vr{\kern-\arraycolsep & \kern-\arraycolsep}
\colorlet{ahi}{blue!15}
\theoremstyle{plain}
\newtheorem{theorem}[subsection]{Theorem}
\newtheorem{proposition}[subsection]{Proposition}
\newtheorem{lemma}[subsection]{Lemma}
\newtheorem{corollary}[subsection]{Corollary}
\newtheorem{theorem*}{Theorem}
\newtheorem{corollary*}{Corollary}
\theoremstyle{definition}
\newtheorem{definition}[subsection]{Definition}
\newtheorem{definition-proposition}[subsection]{Definition-Proposition}
\newtheorem{example}[subsection]{Example}
\newtheorem{notation}[subsection]{Notation}
\newtheorem{remark}[subsection]{Remark}
\newtheorem{caution}[subsection]{Caution}
\date{June 21, 2024}                                           
\title{Smooth Surfaces with Maximal Lines}
\author{Janet Page, Tim Ryan, and Karen E. Smith}
\email{janet.page@ndsu.edu, timothy.ryan.3@ndsu.edu, kesmith@umich.edu}
\thanks{The third author was partially supported by NSF-DMS grant numbers 2101075 and 2200501.}
\begin{document}

\maketitle

\ifIdiotsGuide
\textcolor{teal}{Throughout, we'll use teal for our ``Idiots Guide'' which can be removed later.  Comment out ``$\backslash$IdiotsGuidetrue'' in line 7 to get rid of these comments}
\fi

\begin{abstract} We prove that a smooth projective surface of degree $d$ in $\mathbb P^3$ contains at most $d^2(d^2-3d+3)$ lines. We characterize the surfaces containing exactly $d^2(d^2-3d+3)$ lines:
these occur only in prime characterize $p$ and, up to choice of projective coordinates,  are cut out  by equations of the form $x^{p^{e}+1}+y^{p^{e}+1}+z^{p^{e}+1}+ w^{p^{e}+1} = 0.
$
\end{abstract}

\setcounter{section}{0}
\section{Introduction}

In this paper, we prove a sharp  upper bound for the number of lines on a smooth algebraic surface  in $\mathbb P^3$ depending only on its degree and independent of the ground field, and then we precisely characterize the surfaces that achieve this bound.

Specifically, we prove the following theorem.

\begin{theorem}\label{main}
Let $S \subseteq\mathbb{P}^3$ be a smooth algebraic surface of degree $d > 3$ over an algebraically closed field $k$. 
Then $S$ contains at most $d^2(d^2-3d+3)$ lines. Furthermore, 
 $S$ contains exactly $d^2(d^2-3d+3)$ lines if and only if 
 \begin{enumerate}
 \item[(i)]
 $k$ has characteristic $p>0$;
 \item[(ii)] $d=p^e+1$ for some $e\in \mathbb N$; and 
 \item[(iii)] $S$ is projectively equivalent to the Fermat surface defined by 
 \begin{equation} \label{fermat}
x^{p^{e}+1}+y^{p^{e}+1}+z^{p^{e}+1}+ w^{p^{e}+1} = 0.
\end{equation}
\end{enumerate}
\end{theorem}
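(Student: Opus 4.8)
The plan is to obtain the bound by a careful double-counting / incidence argument on the lines contained in $S$, exploiting the fact that through each point of a smooth surface the ambient $\mathbb{P}^3$ has a pencil of tangent planes and that any line on $S$ lies in such a tangent plane. Concretely, for a line $\ell\subseteq S$ consider the plane sections $H\cap S$ as $H$ ranges over the pencil of planes containing $\ell$; each such section is a plane curve of degree $d$ containing $\ell$ as a component, so it decomposes as $\ell\cup C_H$ with $C_H$ of degree $d-1$. The key local input is a bound on how many other lines of $S$ can meet $\ell$: a line meeting $\ell$ at a point $P$ must lie in the unique tangent plane $T_PS$, and the residual curve $C$ in $T_PS\cap S$ (degree $d-1$) can contain at most $d-1$ lines, with multiplicities controlled by the intersection multiplicity of $C$ with $\ell$ at $P$. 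Summing the contribution of the $d(d-2)+1$ "special" points of $\ell$ (the points where $T_PS\cap S$ is more degenerate, i.e. where $\ell$ meets the residual curve in the generic pencil member with higher multiplicity) gives the count: each line meets at most $(d-2)d + 1$ other lines, and then a Steiner-type inequality $\#\{\text{lines}\}\le 1 + ((d-2)d+1)\cdot(\text{something})$ is not quite enough, so instead I would count flags (line $\ell$, point $P\in\ell$, second line $\ell'\ni P$) two ways, getting $N\cdot(\text{max lines through a point})\le$ (lines through a point bound) and combine with the per-line incidence bound to solve for $N\le d^2(d^2-3d+3)$.

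For the characterization of the extremal case, equality must propagate through every inequality used above. This forces: every point of every line on $S$ is a point through which the maximal number of lines pass; every tangent plane section $T_PS\cap S$ must be a union of lines (a totally degenerate, maximally singular plane curve of degree $d$); and the combinatorial incidence structure of the $N$ lines must be that of a specific highly symmetric configuration. The plan is to recognize this configuration as the one realized by the Hermitian/Fermat surface $x^{q+1}+y^{q+1}+z^{q+1}+w^{q+1}=0$ with $q=p^e$: on this surface every tangent plane meets $S$ in $q+1$ concurrent lines (the classical fact that Hermitian varieties have tangent hyperplanes meeting them in cones), and counting gives exactly $(q+1)^2((q+1)^2-3(q+1)+3)=q^2(q^2+q+1)$ lines, matching $d^2(d^2-3d+3)$ for $d=q+1$. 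One then shows, from the rigidity of the incidence data, that a unique such configuration exists and that any smooth degree-$d$ surface carrying it is projectively equivalent to the Fermat surface — this is where a Hermitian-forms / unitary-geometry classification enters, and where the constraints $p>0$ and $d=p^e+1$ are forced (in characteristic $0$, or for $d$ not of this form, the degenerate tangent sections cannot occur for a smooth $S$, e.g. by a generic-smoothness or Bertini argument, so strict inequality holds).

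The main obstacle I expect is the rigidity step: going from "the abstract incidence structure of the lines is extremal" to "the surface is projectively the Fermat surface." The inequality-sharpening part is bookkeeping, but proving that the extremal incidence configuration is realized by a unique surface up to $\mathrm{PGL}_4$ requires reconstructing the defining equation from the configuration of lines — presumably by showing the lines are exactly the lines of the Hermitian unital/generalized quadrangle, using that the automorphism group acts transitively enough to normalize coordinates, and then checking that the only smooth surface through that configuration of $\Theta(d^4)$ lines is the Fermat one (a linear-system / dimension count: the lines impose enough conditions to pin down the degree-$d$ form up to scalar). A secondary subtlety is the characteristic dichotomy: one must rule out extremal configurations in characteristic $0$ and in characteristic $p$ when $d-1$ is not a power of $p$, which I would do by showing a smooth surface cannot have *every* tangent plane section totally reducible unless the Gauss map is inseparable of the right degree, forcing $d-1=p^e$.
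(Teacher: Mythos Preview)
Your proposal has genuine gaps in both halves of the argument.

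\textbf{The bound.} Your incidence count is not correct as stated: you assert that each line $\ell$ meets at most $(d-2)d+1=(d-1)^2$ other lines of $S$, but on the extremal Fermat surface each line meets $(d-1)(d^2-2d+2)=d^3-3d^2+4d-2$ other lines (for $d=4$ this is $30$, not $9$). So the per-line bound you propose is too small, and the flag-counting you sketch afterward cannot recover the correct total. The paper does not prove the bound by elementary incidence counting at all. It uses Igusa's inequality $\rho(S)\le c_2(\mathcal T_S)-2=d^3-4d^2+6d-2$ on the Picard rank, then shows that the lines meeting $\ell$ (together with $\ell$ and a hyperplane class) give numerically independent classes, bounding their number by the Picard rank. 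Separately, lines skew to $\ell$ are bounded by $(d-1)^4$ via a Bezout argument in the $4$-dimensional affine parameter space of such lines. Summing gives $d^2(d^2-3d+3)$. The Picard-rank input is what makes the argument go through uniformly; a purely local tangent-plane count does not.

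\textbf{The characterization.} Your equality-propagation plan asserts that every tangent plane section must be a union of $d$ concurrent lines, but this does not follow from equality in your (broken) chain of inequalities, and in any case the paper does not proceed this way. What equality in the paper's argument actually yields is that every line lies in exactly $d^2-2d+2$ planes each containing $d$ lines of $S$. From this the paper builds a generalized quadrangle of parameters $(d-1,(d-1)^2)$ on the set of lines, verifies that every triad is $3$-regular via a quadric-configuration argument, and then invokes a theorem of Thas to conclude this quadrangle is the dual Hermitian one (forcing $d-1$ to be a prime power $q$, not yet the characteristic). The rigidity step---which you correctly flag as the hard part---is then done not by a linear-system dimension count, but by extracting from the incidence structure a \emph{numerical star chord pair configuration}: $d^2$ lines partitioned two ways into $d$ coplanar $d$-tuples. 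This forces the equation into the shape $\prod\ell_i+\prod m_j$, and a second Bezout analysis on the parameter space of lines meeting two fixed skew lines of $S$ kills almost all monomials, reducing to $x^{d-1}w+xw^{d-1}+y^{d-1}z+yz^{d-1}$. A final explicit computation with the remaining lines shows $(d-1)$ must vanish in $k$, pinning down the characteristic. Your Gauss-map-inseparability heuristic points in the right direction for this last step but is not how the paper argues it.
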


Previously, an upper bound of $d^4$ for the number of lines on surfaces in three space   was shown by Koll\'ar  \cite[Prop 53]{Kollar}. 
Little is known about {\it sharp} bounds beyond \cref{main}, although  considerable effort has gone into the case of quartics (\S \ref{quartics}).
Non-sharp quadratic bounds have been known for centuries when the field is restricted to characteristic zero (\S \ref{charzero}).

The special  Fermat surfaces starring in \cref{main} have  extremal properties in many other contexts. 
For example, the cones over them are "maximally singular" in the sense that they have the smallest possible {\it F-pure threshold}\footnote{The $F$-pure threshold is a singularity invariant in prime characteristic  that serves as a characteristic $p$ analog of log canonical threshold.} among all {reduced} cone singularities of the same multiplicity \cite[Thm  3.1, 4.3]{LowerBoundsExtremal}.
They are the only smooth surfaces (of degree $d\geq 3$)  in $\mathbb P^3$ with the property that all smooth plane sections are isomorphic \cite{Beauville.90} or 
with the property that 
 the dual surface is smooth \cite{Kleimann+Piene, Noma}. They also have the maximal number of $\mathbb{F}_{p^e}$-points among surfaces of the same degree \cite{Homma-Kim.13,Homma-Kim.15,Homma-Kim.16}. Notably, 
they have been consistently  used as examples of surfaces
with many lines in various settings, e.g., \cite{Bose-Chakravarti,Segre.65,Schutt+Shioda+vanLuijk,Degtyarev.15,Kollar}. 

Because it is helpful to name the surfaces  \eqref{fermat}, we recall
\begin{definition}\label{extremaldef}
Let $X =\mathbb{V}(f) \subseteq \mathbb{P}^n$ be a hypersurface over a field of characteristic $p$. We say that
$X$ is \textit{extremal} if $\mathrm{deg}(f) = p^e+1$ for some positive integer $e$ and  $f \in  (x_0^{p^e},x_1^{p^e},\dots, x_n^{p^e})$.  
\end{definition}
Extremal surfaces are classified in \cite{LowerBoundsExtremal}. 
In particular, a smooth surface $S$ in $\mathbb P^3$ is extremal if and only if $S$ is projectively equivalent to the Fermat extremal surface \eqref{fermat}; see \cite[Thm 6.1]{LowerBoundsExtremal}, \cite[Prop 2.3]{Shi01},  and
 \cite{Beauville.90} for three different proofs.\footnote{See also  \cite{Hasse-Witt}, \cite[Thm~1]{Lan56}, \cite[Thm~9.10]{Hef85}.} 
Using this terminology, the main result of the paper can be stated as follows: 
\begin{displayquote}
 \cref{main}: {\it A smooth surface  of degree $d$ in $\mathbb P^3$ contains at most $d^2(d^2-3d+3)$ lines, with equality 
 if and only if it is extremal.}
\end{displayquote}

Extremal hypersurfaces are  defined by $p^e$-bilinear forms analogously to how quadric surfaces are defined by  bilinear forms; for this reason, they inherit some features of quadrics, a fact  recognized by Shimada with his moniker {\it{$p$-quadrics}} \cite{Shi01}.  In particular, 
extremal surfaces are linked, via their automorphism groups, to the theory of algebraic groups over a finite field \cite{Lan56}. For example, the famous Drinfield curve of  Deligne-Lusztig theory is extremal \cite{Deligne+Lusztig}. The geometry of 
extremal surfaces was studied in the context of {\it Hermitian surfaces} over $\mathbb F_{p^{e}}$  by 
B. Segre and Bose-Chakravarti \cite{Segre.65, Bose-Chakravarti}; here the Frobenius map $r\mapsto r^{p^e}$ is an involution playing the role of complex conjugation in the classical theory. Their beautiful point-line geometry was developed further by Hirschfeld \cite{Hirschfeld}, and eventually in 
  the framework of finite incidence geometry by Payne and Thas; see \cite{PayneThasBook}.

In terms of the  lines on them, extremal hypersurfaces are more like {\it  cubics}  than quadrics---note that $3^2(3^2-3\cdot 3+3)$  is 27. The configuration of lines on  an extremal surface shares properties with the configuration of lines on  a cubic surface; for example,  they contain configurations analogous to Schl\"afli's double sixes \cite{Brosowsky+Page+Ryan+Smith}. In higher dimensions, Cheng---who calls extremal hypersurfaces "{\it $q$-bics}"---shows that the Fano variety of lines on a smooth extremal threefold  behaves similarly to that of cubic threefolds
\cite{Che23a}, \cite{Che23b}.

\smallskip
\subsection{Characteristic Zero}\label{charzero}
Of course,  bounding the maximal number of lines on  a smooth {\it complex} surface of degree $d$  is a classical subject with a history dating back at least to Clebsch's 1861 bound of $d(11d-24)$
\cite[p106]{Clebsch}. An improved bound due to Segre \cite{Segre43} held steady  as the best result for eighty years until Bauer and Rams recently proved  an upper bound of $11d^2-30d+18$;  their bound holds also in characteristic $p$,  provided that $p> d$ \cite{Bauer+Rams}.
Bauer and Rams's bound is not sharp, however, even when $d=4$.

\subsection{Lines on Quartics} \label{quartics}
Rams and Sch\"utt proved a
  sharp upper  bound of 64 lines on a quartic surface of characteristic zero or $p\geq 5$ \cite{Rams+Schutt.15-64lines}. 
Of course, in characteristic three,  \cref{main} guarantees that 112 is a sharp bound on the number of lines on a quartic surface, though this was first proved by 
Rams and Sch\"utt \cite{Rams+Schutt.15-112lines}.   In characteristic two,  $60$ is a sharp bound on the number of  lines on a quartic \cite{Rams+Schutt.18-AtMost64linesChar2, Degtyarev22}. 
Much effort has gone into understanding lines on quartic surfaces; 
 see also \cite{Degtyarev22,Degtyarev+Itenberg+Sertoz,Degtyarev+Rams,Gonzalez+Rams,Miyaoka,Rams+Schutt.14,Rams+Schutt.15-64lines,Rams+Schutt.15-112lines,Rams+Schutt.18-AtMost64linesChar2,Segre.43,Shimada+Shioda,Schur,Veniani-Char2,Veniani-K3Quartic,Veniani-QuarticSymmetries,Veniani-Char3}. See also \cite{Rams+Schutt.20-Quintics} for quintics.  

\medskip

Boissi\`ere and Sarti identify three ways to construct surfaces with many lines: those of the form $\mathbb{V}(\phi(x,y) + \psi(z,w))$, those branched over a curve in $\mathbb{P}^2$ with many total inflection points, and those with many automorphisms  \cite{Boissiere+Sarti}.  Perhaps unsurprisingly, extremal surfaces have all three of these properties. 
However, we do not understand well whether  there can be surfaces over a fixed field of characteristic $p$ with an abundance of lines---say, more than allowed by Bauer and Rams bound for small $d$---but not as many as on an extremal. One question that we do not know the answer to: {\it are there sequences of smooth surfaces in $\mathbb P^3$ admitting sets of lines whose cardinality grows cubically in $d$?}
The number of lines on extremal surfaces  grow quartically in the degree, but  for sequences of non-extremal surfaces,  the number of lines is bounded by a quadratic function in the degree in every class of examples of which we are aware.

\medskip

The proof of \cref{main} has three main steps, which make up the remaining three sections of the paper. 
We prove the bound  on the number of lines in  \cref{sec: line bound} using a result of Igusa to bound the Picard rank; extremal surfaces show immediately that the bound is sharp. 
\cref{uniqueness} begins the proof of the characterization of surfaces "with maximal lines"   by considering special configurations of lines on them and invoking a combinatorial result in incidence geometry due to Thas; specifically, we identify particular configurations of lines called {\it numerical star chord pair configurations} that we already know to exist on extremal surfaces.  
 \cref{last step} uses these configurations and some others to successively force strong restrictions on the defining equation of the surface in a particular choice of coordinates. The main idea, essentially, applies Bezout's theorem to understand when certain members in a net of lines in $\mathbb P^3$ lie on $S$.

	Throughout the paper, we work over an algebraically closed field $k$.

\section{An upper bound on the number of lines of a smooth surface}
\label{sec: line bound}

In this section, we prove the following upper bound on the number of lines on a smooth surface of degree $d$, together with  a restriction on the configuration of lines on a surface achieving this bound. This will establish the first part of \cref{main}. 

\begin{theorem}\label{planes around a line}
A smooth surface $S$ of degree $d$  in $\mathbb P^3$ contains at most $d^2(d^2-3d+3)$ lines.
Moreover, if $S$ contains {\it exactly} $d^2(d^2-3d+3)$ lines, then 
 {\it every}  line on $S$ lies in exactly $d^2-2d+2$ planes containing another line on $S$, and each such plane contains exactly $d$ distinct lines lying on the surface.
\end{theorem}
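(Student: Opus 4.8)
To prove \cref{planes around a line} I would combine a local analysis at each line, Igusa's bound on the Picard number, and a spectral estimate tying them together; the two assertions in the equality case then come for free, since every inequality used is forced to an equality.

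\emph{Step 1: the pencil of planes through a line.} Fix a line $L\subseteq S$. Adjunction on $S$ (with $K_S=(d-4)H$, where $H$ is the hyperplane class) gives $L^2=2-d$, hence $(H-L)^2=0$ and $(H-L)\cdot L=d-1$; so $|H-L|$ is a base-point-free pencil and the induced morphism $\phi_L\colon S\to\mathbb P^1$ is (a resolution of) the projection away from $L$. Its fibres $C_\lambda$ are the residual curves of degree $d-1$ cut on $S$ by the planes $H_\lambda$ through $L$, and $\phi_L|_L$ is the Gauss map $p\mapsto T_pS$, of degree $d-1$. A line $L'\ne L$ of $S$ meeting $L$ at a point $p$ spans with $L$ a unique plane, namely $T_pS=H_\lambda$, and appears there as a linear component of $C_\lambda$; conversely, distinct planes through $L$ carry disjoint sets of ``further'' lines. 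It therefore suffices to bound the number of planes $H_\lambda$ for which $C_\lambda$ acquires a linear component, and to note that each such plane meets $S$ in at most $d$ lines (its section of $S$ has degree $d$). I would bound the number of such planes either by an Euler-characteristic/Chern-class count for the fibration $\phi_L$ --- carried out with care in characteristic $p$, where the generic fibre can itself be singular --- or, equivalently, by a B\'ezout-type estimate for how often the line traced by $\{C_\lambda\}_{\lambda\in\mathbb P^1}$ in the space of plane curves of degree $d-1$ meets the locus of forms with a linear factor. The target is: $L$ lies in at most $d^2-2d+2$ planes carrying another line of $S$, hence meets at most $(d-1)(d^2-2d+2)$ other lines.

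\emph{Step 2: Igusa plus the Hodge index theorem.} Let $L_1,\dots,L_N$ be the lines on $S$ and $A$ the adjacency matrix of the graph on them in which $L_i\sim L_j$ when they meet. Since distinct lines of $\mathbb P^3$ meet transversally in at most a point, $L_i\cdot L_j$ equals the $(i,j)$ entry of $A$ for $i\ne j$, while $L_i^2=2-d$; so the Gram matrix of $L_1,\dots,L_N$ in the N\'eron--Severi group $\mathrm{NS}(S)$ is $M=(2-d)\,\mathrm{Id}_N+A$. The lines span a subspace of $\mathrm{NS}(S)\otimes\mathbb R$, whose intersection form has signature $(1,\rho(S)-1)$ by the Hodge index theorem, so $M$ has at most one positive eigenvalue; and since $\operatorname{rank}M\le\rho(S)$, the eigenvalue $d-2$ of $A$ has multiplicity at least $N-\rho(S)$. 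By Igusa's theorem, $\rho(S)\le b_2(S)=d^3-4d^2+6d-2$. Feeding these facts --- together with $\operatorname{tr}(A)=0$, $\operatorname{tr}(A^2)=\sum_i\deg(L_i)$, and the Step~1 bound $\lambda_{\max}(A)\le(d-1)(d^2-2d+2)$ --- into a Cauchy--Schwarz estimate for the eigenvalues of $A$ lying strictly below $d-2$ should give $N\le d^2(d^2-3d+3)$.

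\emph{Step 3: the equality case.} If $N=d^2(d^2-3d+3)$, every inequality above must be an equality: $\rho(S)=b_2(S)$ (so $S$ is supersingular), the graph of lines is regular of degree exactly $(d-1)(d^2-2d+2)$ with only three distinct eigenvalues, i.e.\ strongly regular, and in particular every line $L$ meets exactly $(d-1)(d^2-2d+2)$ other lines. These lines are partitioned among the planes through $L$ that carry a further line of $S$ --- at most $d^2-2d+2$ planes, each contributing at most $d-1$ of them --- so there must be exactly $d^2-2d+2$ such planes, each containing exactly $d$ lines of $S$, which is the assertion. \emph{The main obstacle is Step~1:} extracting the sharp constant $d^2-2d+2$ rather than a weaker quadratic bound, precisely in the range $d=p^e+1$ where the pencil $\{C_\lambda\}$ degenerates most violently (on an extremal surface each of the $d^2-2d+2$ special planes meets $S$ in $d$ concurrent lines, and $\phi_L$ is far from an ordinary fibration); a secondary difficulty is arranging the spectral bookkeeping in Step~2 so that the constants match exactly and the equality analysis is genuinely forced.
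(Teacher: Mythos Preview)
Your Step~1 has a genuine gap, and the intermediate target you state is not quite right. The claim that $L$ lies in at most $d^2-2d+2$ planes carrying \emph{another} line of $S$ need not hold: a plane through $L$ can contain anywhere from $1$ to $d-1$ further lines, and only the number of \emph{full} planes (those with $d$ lines total) is bounded by $d^2-2d+2$; the number of planes with some further line is bounded only by $\rho(S)-2$. More importantly, the tools you propose---an Euler-characteristic count for the fibration $\phi_L$ or a B\'ezout estimate in the space of plane $(d-1)$-ics---are precisely what breaks in characteristic $p$, as you yourself concede. The paper's route avoids this entirely: it shows that the lines appearing in the residual curves $C_\lambda$ (discarding one line from each full plane), together with $L$ and a hyperplane section $h$, form a numerically independent set in $\mathrm{NS}(S)$, because their intersection matrix is block-diagonal with invertible blocks $\mathbf{1}_{m_i\times m_i}-(d-1)I_{m_i}$ (invertible since $m_i<d-1$). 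Comparing the size of this set with Igusa's bound $\rho(S)\le d^3-4d^2+6d-2$ yields the valence bound $(d-1)(d^2-2d+2)$ directly. So Igusa is applied \emph{locally} at each line $L$, not only as a global cap on the rank of the Gram matrix.

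Your Step~2 is also not the paper's argument, and you have not verified that the Cauchy--Schwarz spectral estimate closes up to the exact constant. The paper instead finishes with an elementary B\'ezout count: choose coordinates so that $S$ contains the skew lines $\mathbb V(x,y)$ and $\mathbb V(z,w)$; then $f=xzf_1+xwf_2+yzf_3+ywf_4$ with $\deg f_i=d-2$. Any line skew to $L=\mathbb V(x,y)$ has equations $z=a_1x+a_2y$, $w=a_3x+a_4y$, and substituting into $f$ yields a degree~$d$ form in $x,y$ whose coefficients are polynomials of degree $d-1$ in $(a_1,a_2,a_3,a_4)$; since $S$ has finitely many lines, B\'ezout in $\mathbb A^4$ gives at most $(d-1)^4$ such lines. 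Adding the valence bound and $L$ itself gives $(d-1)^4+(d-1)(d^2-2d+2)+1=d^2(d^2-3d+3)$, and the equality analysis is then immediate from sharpness of the valence bound. Your spectral route, if it can be made to work, would also show that the line graph is strongly regular in the equality case---information the paper obtains only later via Thas's theorem---but the sharp line count itself comes more cheaply via B\'ezout.
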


\begin{remark} Extremal surfaces of degree $d$ always contain exactly  $d^2(d^2-3d+3)$ lines---the maximal possible---so the bound of \cref{planes around a line} is sharp; see {\it e.g.}  \cite[Thm 3.3.1 (e)]{Brosowsky+Page+Ryan+Smith}. Ultimately, we will see that any surface "containing maximal lines" is extremal.
\end{remark}

The proof of \cref{planes around a line} proceeds as follows: we first use a result of Igusa to bound the Picard rank of $S$ (\cref{IgusaBound}), then use that bound to bound the number of lines intersecting a given line (\cref{max valence}). From there, we  use Bezout's theorem to conclude the result.

\subsection{Maximal Picard rank}

The N\'eron-Severi group of a  smooth projective variety is  the group of its divisors modulo algebraic equivalence. This is a finitely generated group whose rank is 
 called the Picard rank of the variety. Equivalently, the Picard rank can be defined as the rank of the group of divisors modulo numerical equivalence, since these two notions of equivalence agree on all divisors up to torsion \cite{Matsusaka}.   The Picard group of smooth surface $S\subseteq \mathbb P^3$ is torsion free in any case \cite[Thm 1.8]{SGAII} \ifIdiotsGuide \textcolor{teal}{ (technically the statement after it)}\fi,
so we freely interchange algebraic and numerical equivalence in this section.

\begin{proposition}\label{IgusaBound}
\label{picard rank} Let $S$ be a smooth surface in $\mathbb P^3$ of degree $d$. Then the Picard rank of $S$ is bounded above by
$d^3-4d^2+6d-2$.
\end{proposition}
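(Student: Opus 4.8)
The plan is to use Igusa's inequality, which bounds the Picard rank $\rho(S)$ of a smooth projective surface $S$ in characteristic $p$ by the second $\ell$-adic Betti number $b_2(S)$. (In characteristic zero this is the classical Lefschetz bound $\rho \le h^{1,1}$, and Igusa proved the analogous statement in positive characteristic.) So it suffices to compute $b_2(S)$ for a smooth degree-$d$ surface $S \subseteq \mathbb{P}^3$ and check that it equals $d^3 - 4d^2 + 6d - 2$.

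First I would recall the topological Euler characteristic of a smooth hypersurface. For a smooth surface $S$ of degree $d$ in $\mathbb{P}^3$, the $\ell$-adic Euler characteristic is $\chi(S) = d^3 - 4d^2 + 6d$; this is a standard Chern-class computation using the normal bundle sequence $0 \to T_S \to T_{\mathbb{P}^3}|_S \to \mathcal{O}_S(d) \to 0$, giving $c_2(T_S) = (d^2 - 4d + 6)\,H^2$ where $H$ is the hyperplane class, and $H^2 \cdot S = d$ by Bézout. Next I would pin down the odd Betti numbers. For a smooth hypersurface in $\mathbb{P}^n$ of dimension $\ge 1$, the weak Lefschetz theorem gives $b_1(S) = b_1(\mathbb{P}^3) = 0$, and by Poincaré duality $b_3(S) = b_1(S) = 0$. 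Also $b_0 = b_4 = 1$. Therefore $\chi(S) = b_0 - b_1 + b_2 - b_3 + b_4 = 2 + b_2$, so $b_2(S) = \chi(S) - 2 = d^3 - 4d^2 + 6d - 2$. Combining with Igusa's bound $\rho(S) \le b_2(S)$ yields the claimed inequality.

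The main obstacle is not really a difficulty of idea but of bookkeeping and of citing the right form of each input in the positive-characteristic setting: one needs Igusa's theorem (valid over any algebraically closed field via $\ell$-adic cohomology, $\ell \ne p$), the weak Lefschetz theorem for $\ell$-adic cohomology of smooth hypersurfaces (to kill $b_1$ and hence $b_3$), and the fact that the $\ell$-adic Euler characteristic of a smooth hypersurface is computed by the same Chern-number formula as in the complex case (e.g. via the Gauss–Bonnet/Chern formula for $\ell$-adic Euler characteristics, or by specialization/deformation to characteristic zero). Each of these is standard, so the only real work is the Chern-class arithmetic giving $\chi(S) = d^3 - 4d^2 + 6d$; I would carry that out explicitly from the normal bundle sequence and record the intermediate value $b_2 = d^3 - 4d^2 + 6d - 2$, which also matches the count in \cref{planes around a line} via the subsequent valence and Bézout arguments.
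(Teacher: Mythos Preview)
Your proposal is correct and follows essentially the same approach as the paper: both invoke Igusa's bound and compute $\chi(S)=c_2(T_S)=d^3-4d^2+6d$ from the normal bundle sequence. The only cosmetic difference is that the paper states Igusa's inequality in the form $\rho(S)\le \chi(S)-2+4\dim\mathrm{Pic}^0(S)$ and then cites SGA~II for $\dim\mathrm{Pic}^0(S)=0$, whereas you use the equivalent form $\rho(S)\le b_2(S)$ and obtain $b_1(S)=0$ from weak Lefschetz to get $b_2=\chi-2$.
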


\begin{remark}
Extremal Fermat surfaces  achieve this upper bound (as do all degree $d$ Fermat surfaces over fields of characteristic $p$ where $d$ divides $p^e+1$) \cite{Shioda+Katsura}, 
 so this bound is sharp.
\end{remark}

\begin{proof}[Proof of \cref{IgusaBound}]
 Igusa \cite{Igusa} shows that $\rho(S) \leq \chi(S) -2 +4\dim(\mathrm{Pic}^0(S))$   where  $\mathrm{Pic}^0(S)$ is 
the connected component of the identity in the Picard Scheme for $S$ and  $\chi(S)$ is the ``topological Euler characteristic,''
which in arbitrary characteristic can be defined as  \[\chi(S) = \int_{S} c(\mathcal{T}_{S}) = c_2(\mathcal{T}_S),\]
where $\mathcal{T}_S$ is the tangent bundle of $S$; see  \cite[Main Theorem]{Igusa} or \cite[Ex 18.3.7(c)]{Fulton}.

\ifIdiotsGuide
\textcolor{teal}{ Some of these references are vague on what field they are working over.
\cite{Igusa} gives the inequality above and says $\chi(S)$ is the second canonical class.
The discussion in \cite[Last remark]{Yamada} makes it clear that second canonical class means second Chern class of $\mathcal{T}_X$.
Alternatively, \cite[Ex 18.3.7(c)]{Fulton} shows how the degree of the top Chern class is an Euler characteristic, as an alternating sum of cohomology $\sum_{pq}(-1)^{p+q}H^p(X, \Omega_X^q).$ His introduction makes it clear he is working in arbitrary characteristic (and really over an arbitrary field). In general, you define the Euler characteristic using $\ell$-adic cohomology.
Other tangentially relevant references:
\begin{itemize}
\item  \cite{Kleimann+Piene} relates the topological Euler characteristic (in positive characteristic) to the top Chern class. 
\item \cite{Dolgachev} using the same $\ell$-adic Euler characteristic generalized a characteristic 0 formula for pencils (the 112 line authors seemingly use that formula - we use Picard rank to avoid it). 
\end{itemize}
}
\fi

 By \cite[7.2, Expose XI, Theorem 1.8]{SGAII}, $\dim({Pic}^0(S))=0$.
The desired bound follows since $c_2(\mathcal{T}_S)=d^3-4d^2+6d$, which can be seen by applying the additivity of the total Chern character to the normal bundle sequence of $S$ (see \cite[Examples 3.2.11-13]{Fulton}).
\ifIdiotsGuide \textcolor{teal}{ 
Indeed, using the multiplicitivity of total Chern classes on the exact sequence 
\[0 \longrightarrow \mathcal{T}_{S} \longrightarrow \mathcal{T}_{\mathbb{P}^3}\vert_S \longrightarrow \mathcal{N}_{\mathbb{P}^3/S} \longrightarrow 0,\]
where $\mathcal{N}_{\mathbb{P}^3/S}$ is the normal bundle of $S$ in $\mathbb{P}^3$, 
we see that 
\begin{equation} \label{cherncomp}
c\left(\mathcal{T}_{S}\right)  c\left(\mathcal{N}_{\mathbb{P}^3/S}\right) =  c\left(\mathcal{T}_{\mathbb{P}^3}\vert_S\right).\end{equation}
Since $\mathcal{N}_{\mathbb{P}^3/S} \cong \mathcal{O}_S(d)$, we have $c\left(\mathcal{N}_{\mathbb{P}^3/S}\right) = 1+dH|_S$. 
Using the Euler sequence for $\mathcal T_{\mathbb P^3}$, we compute that 
$c(\mathcal T_{\mathbb P^3}) = (1+H)^4 = 1+4H+6H^2+4H^3$; thus, restricting to $S$,  we see that
\[c\left(\mathcal{T}_{\mathbb{P}^3}\vert_S\right) = 1+4H|_S+6(H|_S)^2.\] 
Thus, \eqref{cherncomp} becomes
\begin{equation}\label{cherncomp2}
c\left(\mathcal{T}_{S}\right)  \left(1+dH|_{S}\right) = 1+4H|_S+6(H|_S)^2.
\end{equation}
This implies that
\[c\left(\mathcal{T}_{S}\right) = 1+(4-d)H|_S+(d^2-4d+6)(H|_S)^2,\]
so that 
$c_2\left(\mathcal{T}_{S}\right) = (d^2-4d+6)d$.  
Thus, $\rho(S) \leq  \chi(S) - 2 = d^3-4d^2+6d -2$ as desired.}
\fi
\end{proof}

\medskip
\subsection{Large Sets of Independent  Curve Classes}
As a step towards \cref{planes around a line}, we bound the number of lines on a surface $S$ intersecting a fixed line on $S$ in terms of the Picard rank. 
In order to state the result, we need some notation.
\begin{notation}\label{pencil notation}
Consider  a smooth surface $S\subseteq \mathbb P^3$ of degree $d\geq 3$, together with a fixed line $L$ on $S$.  Consider the set  of all lines on $S$ that intersect $L$ (excluding $L$ itself). Note that 
this is a finite set.
\ifIdiotsGuide
\textcolor{teal}{Consider a smooth surface $S$ of degree $d$. 
First, a useful fact. Any set of skew lines is numerically independent, so the Picard rank bounds the size of  skew set on $S$. \medskip\\
Fix a line $L$ on $S$. We claim that finitely many lines of $S$ intersect $L$. 
Consider the pencil of plane sections of $S$ containing $L$.
Any line of $S$ intersecting $L$ would lie in one of the fibers of this pencil. 
The lines in {\it different} fibers are skew by  \cref{threelines}. So we can have at most $\rho(S) $ fibers of the pencil containing lines. Since each fiber contains at most $d-1$ lines, the number of lines intersecting $L$ is bounded by $\rho(S)(d-1)$.\medskip\\
Now to see that  $S$ contains finitely many lines, fix a maximal skew set. Its cardinality is bounded by $\rho(S)$, and all remaining lines intersect at least one line in the skew set.
By the above argument, at most $\rho(S)\cdot\rho(S)\cdot(d-1)$ lines intersect that skew set, so there are at most $\rho(S)\cdot\rho(S)\cdot(d-1)+\rho(S)$ lines on $S$.
Thus, $S$ contains finitely many lines.}
\fi
Let 
$$
H_1, H_2, \dots,  H_s $$ 
be the complete list of planes in $\mathbb P^3$ that contain both $L$ and some other line of $S$. For each,  one of two possibilities holds for
the plane section $H_i \cap S.$   Let $\ell_i$ denote the number of lines on $S$ in the plane $H_i$ (other than $L$). Then 
either
\begin{itemize}
\item[(a).] $H_i \cap S =  L\cup L_{i,1}\cup\dots\cup L_{i, d-1} $ is a union of $d$ distinct  lines; {\sc or}
\item[(b).]  $H_i\cap S = L\cup L_{i,1}\cup\dots\cup L_{i,\ell_i} \cup C_i, $ where  $C_i$  is a degree $d-(\ell_i+1)\geq 2$ curve in $H_i$, none of whose irreducible components are lines. 
\end{itemize}
In Case (a), note that $\ell_i=d-1$, where as in case (b), $\ell_i\leq d-3$.
For each $i$, let 
\begin{equation}\label{**}
m_i= \begin{cases} 
d-2 \,\,\,\,{\text{in Case (a); }}  \\
\ell_i \,\,\, {\text{in Case (b).}}
\end{cases}
\end{equation}
\end{notation}

\begin{proposition}\label{independent classes}
Fix a line $L$ on a smooth surface $S$ of degree $d$. Let 
\begin{equation}
\mathcal L_i = \{L_{i,1}, L_{i,2}, \dots,  L_{i,m_i}\}
\end{equation}
be the set of coplanar lines on $S$ in the notation defined in \ref{pencil notation}. Let $h$ be an arbitrary plane section of $S$.
The set of curves 
\begin{equation}\label{independent set}
 \mathcal L_1 \cup \mathcal L_2 \cup \cdots \cup \mathcal L_s \cup \{L\} \cup \{h\}
\end{equation}
 represents a numerically independent set of curve classes on $S$.
\end{proposition}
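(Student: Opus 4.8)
The plan is to show that any nontrivial numerical relation among the classes in \eqref{independent set} must be trivial, by exploiting the intersection-theoretic structure coming from the pencil of plane sections through $L$. First I would record the basic intersection numbers on $S$: writing $h$ for a plane section, one has $h^2 = d$, and for any line $L'$ on $S$ we have $L'\cdot h = 1$ and $L'^2 = 2 - d$ (by adjunction, since $L'$ is a smooth rational curve on the smooth surface $S$ and $K_S = (d-4)h$). Moreover two distinct lines on $S$ meet in $0$ or $1$ point depending on whether they are coplanar; crucially, by \cref{threelines} (referenced in \cref{pencil notation}) two lines lying in \emph{different} members $H_i \neq H_j$ of the pencil through $L$ are skew, so $L_{i,a}\cdot L_{j,b} = 0$ whenever $i\neq j$. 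This means the Gram matrix of \eqref{independent set} is essentially block-diagonal, with one block per plane $H_i$ together with the two extra classes $L$ and $h$.

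The main step is then to suppose a relation
\[
\sum_{i=1}^{s}\sum_{a=1}^{m_i} c_{i,a}\,L_{i,a} \;+\; b\,L \;+\; e\,h \;=\; 0
\]
holds numerically, and to extract its coefficients by intersecting with well-chosen curves. Intersecting with $h$ kills nothing but gives $\sum_{i,a} c_{i,a} + b + e\,d = 0$. The real leverage comes from intersecting with a single line $L_{i_0,a_0}$: because of the skewness across different planes, all terms from planes $H_j$ with $j\neq i_0$ drop out, and $L$ meets $L_{i_0,a_0}$ in exactly one point, so we get an equation involving only the coefficients $\{c_{i_0,a}\}_a$ within the one plane $H_{i_0}$, plus $b$ and $e$. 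Doing this within a fixed plane $H_{i_0}$ reduces the problem to a purely local, finite-dimensional linear-algebra computation about the lines (and possibly the residual curve $C_{i_0}$) lying in that plane. I expect one handles Case (a) and Case (b) of \cref{pencil notation} separately here: in Case (a) the $d-1$ lines $L\cup L_{i_0,1}\cup\cdots\cup L_{i_0,d-1}$ are the full plane section, which forces one linear relation among them (they sum to $h$), so we are allowed to keep only $m_{i_0} = d-2$ of them and independence within the block should follow from the configuration of their pairwise intersections; in Case (b) we keep all $\ell_i$ lines and the residual curve $C_i$ provides the "missing" class, again pinning the $c_{i_0,a}$.

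The key computation, and the step I expect to be the main obstacle, is verifying that the intersection matrix of the lines \emph{within a single plane section} (together with $L$, $h$, and in Case (b) the residual curve $C_i$) is nonsingular after discarding the one obvious relation $\sum(\text{lines}) = h$ — equivalently, that inside each block the chosen $m_i$ classes plus $L,h$ are independent. Concretely, $d$ concurrent-or-generic lines in a plane have a rank-$2$-plus structure in $\operatorname{NS}(S)$ only insofar as they all push forward from the plane, and one must rule out further numerical coincidences; this is where the hypothesis that no component of $C_i$ is a line, and the count $m_i = \ell_i \le d-3$ in Case (b), gets used. Once each block is shown nondegenerate modulo the single relation it carries, the block-diagonal shape of the global Gram matrix (with $L$ and $h$ "gluing" the blocks in a controlled way — $L$ meets every $L_{i,a}$ once, $h$ meets every curve once) lets me assemble global independence from the local statements: a global relation restricts to a relation in each block, forcing all $c_{i,a}=0$ block by block, and then $b\,L + e\,h = 0$ with $L^2 = 2-d \neq L\cdot h = 1$ forces $b = e = 0$. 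I would close by remarking that this is exactly the input needed, via \cref{IgusaBound}, to bound $\sum_i m_i + 2 \le \rho(S) \le d^3 - 4d^2 + 6d - 2$ and hence to control the valence of $L$ in the proof of \cref{planes around a line}.
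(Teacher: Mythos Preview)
Your approach is essentially the paper's: compute the pairwise intersection numbers of the set \eqref{independent set}, recognise the block structure coming from the pencil through $L$, and check that the resulting Gram matrix is nonsingular. The paper simply writes down this Gram matrix and computes its determinant directly rather than chasing a hypothetical relation, but the content is identical.

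Two small points of comparison. First, the step you flag as the ``main obstacle'' is in fact the trivial part: within a single plane $H_{i_0}$ the Gram matrix of $\{L_{i_0,1},\dots,L_{i_0,m_{i_0}}\}$ is $\mathbf{1}_{m_{i_0}\times m_{i_0}} - (d-1)I$, whose determinant $(1-d)^{m_{i_0}-1}(m_{i_0}-(d-1))$ is nonzero precisely because $0<m_{i_0}<d-1$; this is where the definition of $m_i$ in \eqref{**} is used, and no separate treatment of Cases (a) and (b) is required. Second, the residual curves $C_i$ never enter --- they are neither in the set \eqref{independent set} nor needed as auxiliary test classes --- so you can drop that part of the plan. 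With those simplifications your relation-chasing argument goes through exactly as you outline (intersecting with each $L_{i_0,a_0}$ forces the $c_{i_0,a}$ within a block to be equal, then intersecting with $L$ and $h$ and using $m_i<d-1$ forces everything to zero), and the paper's alternative Remark after the proof confirms that the within-block independence can also be read off from a general fact about fibres of the pencil.
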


Before proving \cref{independent classes}, for convenience, we 
record a simple fact that we use repeatedly: 

\begin{lemma}\label{threelines}
Three lines on a smooth surface in $\mathbb P^3$   intersect pairwise if and only if they are coplanar. 
\end{lemma}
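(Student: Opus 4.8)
The plan is to prove the two implications separately. The reverse implication requires nothing about $S$: if $L_1, L_2, L_3$ lie in a common plane $H \cong \mathbb{P}^2$, then any two distinct lines of $H$ meet, so the three intersect pairwise (and the statement is trivial if some of the $L_i$ coincide). For the forward implication, assume $L_1, L_2, L_3$ are distinct lines on $S$ that intersect pairwise, and split into two cases according to whether they pass through a common point.

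In the concurrent case --- all three lines through a single point $p$ --- I would invoke smoothness of $S$. Since $p$ is a smooth point of the hypersurface $S = \mathbb{V}(f) \subseteq \mathbb{P}^3$, the embedded projective tangent space $\mathbb{T}_p S = \mathbb{V}\!\left(\sum_i \partial_i f(p)\, x_i\right)$ is a hyperplane, i.e.\ a plane $\mathbb{P}^2$. Any line $L \subseteq S$ through $p$ is tangent to $S$ at $p$ (expanding $f$ along a parametrization of $L$ through $p$, the degree-one term in the parameter vanishes), hence $L \subseteq \mathbb{T}_p S$. Therefore $L_1, L_2, L_3$ all lie in the plane $\mathbb{T}_p S$ and are coplanar.

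In the remaining case the three pairwise intersection points $p_{12} = L_1 \cap L_2$, $p_{13} = L_1 \cap L_3$, $p_{23} = L_2 \cap L_3$ are pairwise distinct: a coincidence among two of them would produce a point lying on all three lines, returning us to the concurrent case. Now $L_1$ and $L_2$ are distinct lines meeting at $p_{12}$, so they span a unique plane $H \cong \mathbb{P}^2$; since $L_3$ contains the two distinct points $p_{13}, p_{23}$ of $H$, it lies in $H$, and all three lines are coplanar. The only step with any real \emph{content} is the concurrent case, where the $2$-dimensionality of the tangent plane at a smooth point is exactly what forbids three non-coplanar concurrent lines on $S$; everything else is incidence geometry in $\mathbb{P}^3$. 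Accordingly, the only point I expect to need care is the clean assertion that a line on $S$ through $p$ is contained in $\mathbb{T}_p S$, valid in every characteristic.
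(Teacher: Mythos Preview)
Your proof is correct and follows essentially the same approach as the paper: both argue the reverse direction trivially, then split the forward direction into the case of three distinct intersection points (where the triangle spans a plane) and the concurrent case (where smoothness forces all three lines into the two-dimensional tangent plane $T_pS$).
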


\begin{proof} Coplanar lines always intersect, so we need only prove the converse. Let  $L_1$, $L_2$, and $L_3$ be three lines on a smooth surface $S\subseteq \mathbb{P}^3$. 
If their three pairwise intersection points are {\it distinct}, then these lines form a triangle obviously spanning a plane. If they all intersect at one point $p$, then for each $i$ we have 
$p\in L_i \subseteq T_pS$. Since $S$ is smooth, the tangent space $ T_pS$ is two-dimensional. Thus, the three lines are coplanar as they are contained in $T_pS$.
\end{proof}

\begin{proof}[Proof of \cref{independent classes}]
It suffices to show that the corresponding intersection matrix is invertible. To this end,  observe that 
\begin{equation}
h^2 = d \,\,\, \, {\text{and}} \,\,\,\, \,\, h\cdot M = 1
\end{equation}
for any line $M$ in \eqref{independent set}, including $L$.
Likewise, since $L\subseteq H_i$ for all $i$, and two lines in $\mathbb P^3$ intersect if and only if they are coplanar, 
 \begin{equation}
L\cdot L_{i,j}=1\,\, \,\,\, \, {\text{and}} \,\,\,\, \,\,  L_{i,j} \cdot L_{i',j'} = \delta_{ii'}
 \end{equation}
 for all indices $i, j , i', j'$.
Indeed, since $L_{i,j} \cap L_{i',j'}  \subseteq H_i\cap H_{i'}=L$, the lines $L_{i,j} $ and $ L_{i',j'}$ can only intersect at a point $p$ on $L$, in which case $L$, $L_{i,j} $ and $ L_{i',j'}$ pairwise intersect and so are coplanar by  \cref{threelines};  in this case $T_pS =H_i=H_{i'}$, so $i=i'$.

Finally, we claim that for any line $M$ on $S$, $M^2= 2-d$. Indeed, let $\Lambda$ be any plane containing $M$, and let $C$ be the corresponding residual curve, so that $\Lambda \cap S = M \cup C$. Then as classes, 
$
M \equiv h - C
$, 
and
\begin{equation} M^2 = h^2 - 2C\cdot h + C^2 = d - 2(d-1) +C^2= 2-d +C^2 = 2-d.
\end{equation}
To see that $C^2=0$,  compute $C\cdot C= C\cdot C'$  where $C'$ is the residual curve  for a different hyperplane section  $\Lambda' \cap S = M\cup C'$.
Note that $C\cap C' \subseteq \Lambda \cap \Lambda'=M$, so if $p$ is an  intersection point of $C$ and $C'$, then  $p\in M$.  So $p$ is a singular point of 
  both $C\cup M$ and $C'\cup M$. This would mean that both  planes $\Lambda$ and $\Lambda'$ are tangent to $S$ at $p$,  contrary  to the smoothness of $S$ at $p$.

  The previous calculations show that the 
 intersection matrix for the ordered set\footnote{Here, we order the set \eqref{independent set} so that the 
$L_{i,j}$ are ordered lexicographically,  followed by $L$, then $h$.}  \eqref{independent set} is 
  \equation \label{intersection}
\begin{bmatrix}
D            & \vec{1} & \vec{1}\\
\vec{1}^T &  2-d & 1 \\
\vec{1}^T &  1 & d \\
\end{bmatrix}
\endequation
where $D$ is a block diagonal matrix whose
diagonal blocks $A_1,\dots,A_s$ are the intersection matrices of the subsets  $\mathcal L_1,  \dots  \mathcal L_s, $ and $\vec 1$ is a column vector of all $1$'s  of the appropriate size.  Each $A_i$ block satisfies
\[A_i = \mathbf{1}_{m_i\times m_i}- (d-1)I_{m_i}.\] 
After a few invertible row and column operations, we can 
 convert the  matrix \eqref{intersection} to
\begin{equation}\label{new matrix}
\begin{bmatrix}
D            & \vec{0} & \vec{0}\\
\vec{0}^T &  1 & 0 \\
\vec{0}^T &  0 & 1 \\
\end{bmatrix},
\end{equation}
where $\vec 0$ is a column of $0$'s of the appropriate size.
To see that \eqref{new matrix} has non-zero determinant, we compute that  each block matrix $A_i$ has determinant  $(1-d)^{m_i-1}(m_i-(d-1))$, which is non-zero since  $0< m_i < d-1$.

Thus, the  intersection matrix \eqref{intersection} has non-zero determinant and the  set of numerical equivalence class of curves 
$\bigcup_{i=1}^s\{L_{i,j} \mid 1 \leq j \leq m_i\} \bigcup \{L, h\}$
 is  linearly independent, as desired.
  
\end{proof}

\begin{remark}
Alternatively, we can complete the proof of \cref{independent classes} 
without explicitly computing $\det(A_i)$ by applying  
Theorem 7.1.15 in \cite{Kollar+Lieblich+Olsson+Sawin} to the pencil of plane sections through $L$ on $S$. The set $\bigcup_{i=1}^s\{L_{i,j} \mid 1 \leq j \leq m_i\}$ is  a union of components of fibers that does not completely cover any fiber, so it must be 
algebraically (and hence numerically)  independent. This implies that the block diagonal matrix formed by $A_1, \dots, A_r$ is invertible, so we can proceed with the rest of the proof as above. 
\end{remark}

The following corollary is immediate.
\begin{corollary}
\label{pencil} Fix a line $L$ on a smooth surface $S$ of degree $d$. With notation as 
in  \cref{pencil notation}, we have
\[\rho(S) \geq 2+\sum_{i=1}^s m_i,\]
where $m_i$ is as defined in \eqref{**}.
\end{corollary}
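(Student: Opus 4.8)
The statement is an immediate consequence of \cref{independent classes}, so the plan is simply to package that proposition correctly. The key observation is that the Picard rank $\rho(S)$ — being the rank of the (torsion-free) group of divisor classes on $S$ modulo numerical equivalence, as recalled at the start of this section — is an upper bound for the cardinality of any numerically independent set of curve classes on $S$.

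First I would count the curves in the set
\[
\mathcal L_1 \cup \mathcal L_2 \cup \cdots \cup \mathcal L_s \cup \{L\} \cup \{h\}
\]
appearing in \cref{independent classes}. By definition each $\mathcal L_i$ consists of exactly $m_i$ lines; the $\mathcal L_i$ are pairwise disjoint, since two distinct lines of $\mathbb P^3$ lie in at most one common plane, so a line on $S$ cannot belong to $\mathcal L_i$ and $\mathcal L_{i'}$ for $i\neq i'$; and neither $L$ nor the plane section $h$ occurs among the $L_{i,j}$. Hence the displayed set has exactly $2+\sum_{i=1}^s m_i$ elements. \Cref{independent classes} asserts these classes are numerically independent — in particular pairwise distinct — whence $\rho(S)\geq 2+\sum_{i=1}^s m_i$.

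There is essentially no obstacle here: the only point meriting a moment's care is verifying that the enumerated curves genuinely represent $2+\sum_{i=1}^s m_i$ \emph{distinct} numerical classes, and this is already subsumed in the numerical independence furnished by \cref{independent classes}.
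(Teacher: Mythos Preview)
Your proof is correct and matches the paper's approach exactly: the paper simply states that the corollary is immediate from \cref{independent classes}, and your argument spells out precisely why---counting the $2+\sum_{i=1}^s m_i$ curves in \eqref{independent set} and invoking their numerical independence to bound $\rho(S)$ from below.
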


\subsection{Bounding  the number of lines on smooth surfaces in $\mathbb P^3$.}
We  now use \cref{pencil} to establish a bound on the maximal number of lines that can intersect a given line on a smooth surface, and characterize when the maximal number is achieved. Extremal surfaces show that this bound is sharp.

\begin{proposition}\label{max valence}
Let $S\subseteq\mathbb{P}^3$ be a smooth surface of degree $d\geq 3$ containing a line $L$.
Then 
\begin{itemize}
\item[(i).] At most $d^3-3d^2+4d-2$ lines of $S$ intersect $L$ (not counting $L$ itself);
\item[(ii).] Exactly $d^3-3d^2+4d-2$  lines of $S$ intersect $L$   if and only if $L$  lies in a configuration of lines on $S$ formed by  $d^2-2d+2$ plane sections
 consisting of  $d$  lines of $S$, one of which is $L$.
 \end{itemize}
\end{proposition}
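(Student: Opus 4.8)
The plan is to reduce everything to the two estimates already established: the Picard–rank bound $\rho(S)\le d^3-4d^2+6d-2$ of \cref{IgusaBound}, and the inequality $\rho(S)\ge 2+\sum_{i=1}^s m_i$ of \cref{pencil}, combined with the trichotomy of plane sections through $L$ organized in \cref{pencil notation}. First I would note that two lines in $\mathbb P^3$ meet if and only if they are coplanar, so every line of $S$ meeting $L$ lies in one of the planes $H_1,\dots,H_s$; hence the quantity to bound is exactly $N:=\sum_{i=1}^s \ell_i$. Next I would compare $N$ to $\sum_i m_i$: by \eqref{**} we have $\ell_i=m_i$ when $H_i$ is of type (b) and $\ell_i=d-1=m_i+1$ when $H_i$ is of type (a), so $N=\sum_i m_i+a$, where $a$ is the number of type-(a) planes through $L$. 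Now \cref{pencil} and \cref{IgusaBound} give $\sum_i m_i\le (d-2)(d^2-2d+2)$, and since every type-(a) plane contributes exactly $d-2$ to $\sum_i m_i$ we also get $a(d-2)\le\sum_i m_i\le(d-2)(d^2-2d+2)$, hence $a\le d^2-2d+2$ (using $d\ge 3$). Adding these gives $N\le(d-1)(d^2-2d+2)=d^3-3d^2+4d-2$, which is (i); a clean way to package it is $(d-2)N=(d-2)\sum_i m_i+(d-2)a\le(d-1)\sum_i m_i\le(d-1)(d-2)(d^2-2d+2)$.

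For part (ii), equality in (i) forces both $\sum_i m_i=(d-2)(d^2-2d+2)$ (so $\rho(S)$ attains the bound of \cref{IgusaBound}) and $a(d-2)=\sum_i m_i$, i.e. the type-(b) planes contribute $0$ to $\sum_i m_i$. Since each plane on the list $H_1,\dots,H_s$ contains a line of $S$ besides $L$, a type-(b) plane has $m_i=\ell_i\ge1$; so there can be no type-(b) planes, whence $s=a=d^2-2d+2$ and each $H_i\cap S$ is a union of $d$ distinct lines of $S$, one being $L$ --- exactly the configuration described. For the converse I would start from such a configuration of $r:=d^2-2d+2$ type-(a) planes through $L$; if some further plane through $L$ contained another line of $S$, then $\sum_i m_i\ge r(d-2)+1$, and \cref{pencil} would force $\rho(S)\ge 2+r(d-2)+1=d^3-4d^2+6d-1$, contradicting \cref{IgusaBound}. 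Hence those $r$ planes are all the planes through $L$ meeting another line of $S$, all of type (a), so $N=r(d-1)=d^3-3d^2+4d-2$.

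I do not anticipate a genuine obstacle: once \cref{IgusaBound} and \cref{independent classes} (hence \cref{pencil}) are available, the argument is linear bookkeeping in the notation of \cref{pencil notation}. The one point worth flagging is the bound $a\le d^2-2d+2$ on the number of ``fully split'' planes through $L$: it is tempting to look for a separate Bezout-style argument for this, but it is in fact subsumed by the same Picard estimate, precisely because type-(a) planes are those contributing the maximal amount $d-2$ to $\sum_i m_i$. The remaining care is in tracking which inequalities must become equalities for the characterization in (ii), and in invoking the rigidity of maximal Picard rank to rule out extra planes in the converse direction.
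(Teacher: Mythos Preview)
Your proposal is correct and follows essentially the same approach as the paper: both arguments combine the Igusa bound $\rho(S)\le d^3-4d^2+6d-2$ with \cref{pencil} via the relation $\sum_i \ell_i=\sum_i m_i+r$ (your $a$ is the paper's $r$), extract the bound $r\le d^2-2d+2$ from $r(d-2)\le\sum_i m_i$, and add. Your treatment of the converse direction in (ii)---ruling out extra planes through $L$ by showing they would push $\rho(S)$ past the Igusa bound---is slightly more explicit than the paper's, which leaves that step implicit in its inequality $\sum_i\ell_i\le d^3-4d^2+6d-4+r$.
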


\begin{proof}
In the notation of \cref{pencil notation}, let 
$r$ be the number of planes $H_i$ satisfying (a)---that is, the number of planes $H_i$ containing $d$ lines of $S$, one of which is $L$. Note that $r$ can be zero. 
Remembering the definition of $m_i$ (see \eqref{**}), 
\cref{pencil} can be rewitten as 
\begin{equation}\label{rholowerbound}
    \rho(S) \geq  2+\sum_{i=1}^s \ell_i -r
\end{equation}
where $\ell_i$ is the number of lines on $S$ in $H_i$ other than $L$.
Putting \eqref{rholowerbound} and \cref{picard rank} together and adding $r-2$ to both sides, we have
\begin{equation}\label{2}
    d^3-4d^2+6d-4 + r \geq \sum_{i=1}^s \ell_i.
\end{equation}
By definition of $\ell_i$, the right-hand side of this equality is the number of lines on $S$ that intersect $L$.
Since $\ell_i = d-1$ for exactly $r$ indices $i$, we have that 
\begin{equation}
    (d-2)(d^2-2d+2)+ r =  d^3-4d^2+6d-4 + r  \geq r(d-1)
 \end{equation}
 which means that 
 \begin{equation}\label{rbound}
    d^2-2d+2 \geq r. 
 \end{equation}
Combining  \eqref{2} and \eqref{rbound}, we see that 
\[d^3-4d^2+6d-4+(d^2-2d+2) \geq  \sum_{i=1}^s \ell_i.\]
Thus, $d^3-3d^2+4d-2$ is an upper bound on the number of lines intersecting $L$, as desired.

For the final statement, first note that if $r<d^2-2d+2$, then 
\[\sum_{i=1}^s \ell_i \leq \rho(S)-2+r <  (d^3-4d^2+6d-2) -2+(d^2-2d+2) = d^3-3d^2+4d-2,\]
which is the maximal  possible number of lines intersecting a given line $L$ on $S$. 
Thus,  this maximum can be achieved only when  $r = d^2-2d+2$---that is only when there are exactly  $ d^2-2d+2$ planes each containing exactly $d$ lines of $S$, one of which is  $L$.
\end{proof}

We are now ready to prove \cref{planes around a line}.

\begin{proof}[Proof of  \cref{planes around a line}]
If $S$ does not have two skew lines, then it has at most $d$ lines since they are all contained in a single plane.
\ifIdiotsGuide
\textcolor{teal}{If there are no two skew lines, then every lines intersects every other line, and by \cref{threelines}, they all lie in a single plane.}
\fi
Thus, if $S$ has more than $d$ lines, we can choose coordinates so that $S$ contains the skew lines $L = \mathbb{V}(x,y)$ and $M = \mathbb{V}(z,w)$. In other words,  if $f$ is the homogeneous form defining $S$, then $f\in (x, y)\cap (z, w)$, so that
\begin{equation}\label{ff form}
f = xz f_1+xwf_2+yzf_3+ywf_4,
\end{equation}
where the $f_i$ are homogenous polynomials of degree $d-2$.

 Let us count the lines on $S$ that do not intersect $L$. 
An arbitrary line $N$  skew to $L$ in $\mathbb P^3$ is cut out by two homogenous forms, $h_1$ and $h_2$ such that the ideal $(x, y, h_1, h_2)$ is equal to $(x, y, z, w)$.
Thus, without loss of generality, we can assume $h_1$ and $h_2$ are the forms
\[
a_1x+a_2y - z \,\,\,\,\,\, {\text{and} } \,\,\,\,\,\,\,\, a_3x+a_4y - w,
\]
for some scalars $a_i$.
The line $N$ defined by these forms lies on $S$ if and only if  $f\in  (h_1, h_2)$, or equivalently, if and only if $f$ is in the kernel of the "restriction to $N$" map
\[
k[x, y, z, w ] \rightarrow k[x, y]\,\,\,\,\,\, z\mapsto a_1x+a_2y, \,\, \, w\mapsto a_3x+a_4y.
\]
Remembering that  the polynomial $f$ has the form \eqref{ff form}, we see that this map sends $f$ to a polynomial of degree $d$ in $x$ and $y$ whose coefficients 
are polynomials of degree $d-1$ in $a_1, a_2, a_3, a_4$.  In other words, $N$ lies on $S$ if and only if these degree $d-1$ polynomials in the four variables $a_1, a_2, a_3, a_4$ vanish. 
Since we know that $S$ has only finitely many lines, Bezout's theorem for affine space implies there are at most $(d-1)^4$ solutions to these polynomials in four variables. 
That is, there are at most $(d-1)^4$ lines on $S$ skew to $L$.

Invoking \cref{max valence}, we conclude that in total, there are at most 
\[(d-1)^4+(d^3-3d^2+4d-2)+1 = d^2(d^2-3d+3)\]
lines on $S$ as claimed.

Finally,  note that the   argument  above is sharp:  for any line  $L$, if there are {\it not } $d^2-2d+2$ planes containing $L$ and $d-1$  other lines of $S$,
then there are strictly fewer than $d^2(d^2-3d+3)$
lines on $S$.
\end{proof}

\begin{corollary}\label{maxbetweenskewlines}
	If $S$ is a smooth surface of degree $d>3$ which contains $d^2(d^2-3d+3)$ lines (the maximal possible according to  \cref{planes around a line}), then for any two skew lines  on $S$, there are $(d-1)^2 +1$ lines on $S$ intersecting both.
\end{corollary}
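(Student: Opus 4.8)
The plan is to fix two skew lines $L$ and $M$ on $S$ and enumerate the lines of $S$ meeting both of them directly, exploiting the rigid structure guaranteed by \cref{planes around a line}. Since $S$ attains the maximal count $d^2(d^2-3d+3)$, that theorem tells us $L$ lies in exactly $s := d^2-2d+2$ planes $H_1,\dots,H_s$, and each $H_i\cap S$ is a union of exactly $d$ distinct lines of $S$, one of which is $L$; write the remaining ones as $L_{i,1},\dots,L_{i,d-1}$.

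First I would locate $M$ relative to this pencil of planes. Because $M$ is skew to $L$ and $L\subseteq H_i$, the line $M$ is not contained in $H_i$, so $M\cap H_i$ is a single point $p_i$. These points are pairwise distinct: if $p_i=p_j$ for $i\neq j$, then $p_i\in H_i\cap H_j=L$, contradicting $M\cap L=\emptyset$. Moreover $p_i\in M\subseteq S$, so $p_i$ lies on the degree-$d$ plane curve $H_i\cap S=L\cup L_{i,1}\cup\dots\cup L_{i,d-1}$; since $p_i\notin L$, it lies on at least one $L_{i,j}$. In fact it lies on exactly one of them: if $p_i$ lay on two of the $L_{i,j}$, then $T_{p_i}S$ would be spanned by these two coplanar lines and hence equal $H_i$, but $M$ is a line of $S$ through $p_i$, so $M\subseteq T_{p_i}S=H_i$, a contradiction. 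Denote the unique such line by $L_{i,j_i}$.

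The main step is to identify the set of lines of $S$ meeting both $L$ and $M$ with exactly $\{L_{1,j_1},\dots,L_{s,j_s}\}$. On one hand, each $L_{i,j_i}$ is coplanar with $L$ (both lie in $H_i$) and so meets $L$, and it passes through $p_i\in M$ and so meets $M$. Conversely, let $N$ be any line of $S$ meeting both $L$ and $M$. Then $N\neq L$, and the plane spanned by $N$ and $L$ contains $L$ together with a second line of $S$, hence equals some $H_i$; then $N\subseteq H_i$ forces $N\cap M\subseteq H_i\cap M=\{p_i\}$, and since $N$ meets $M$ we get $p_i\in N$. As $p_i\notin L$, the line $N$ is the unique line of $S$ lying in $H_i$ and passing through $p_i$, i.e. $N=L_{i,j_i}$. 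Finally, the lines $L_{i,j_i}$ are pairwise distinct, since $L_{i,j_i}$ together with $L$ spans $H_i$ and the $H_i$ are distinct. Hence the number of lines of $S$ meeting both $L$ and $M$ equals $s=d^2-2d+2=(d-1)^2+1$.

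I do not expect a serious obstacle: once \cref{planes around a line} is in hand, everything reduces to the elementary incidence facts already recorded — two lines in $\mathbb{P}^3$ are coplanar if and only if they meet, and smoothness determines the tangent plane at a point through which two lines of the surface pass. The only points needing a little care are the claim that exactly one $L_{i,j}$ passes through $p_i$ and, relatedly, that $M\not\subseteq H_i$; both follow from $M$ being skew to $L$ together with the smoothness of $S$. One should also record the trivial identity $d^2-2d+2=(d-1)^2+1$ and note that $L$ and $M$ themselves are automatically excluded from the count, since skew lines do not meet.
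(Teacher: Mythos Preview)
Your argument is correct and follows essentially the same route as the paper's proof: use \cref{planes around a line} to get the $s=(d-1)^2+1$ planes through $L$, intersect each with $M$, and read off one line per plane meeting both $L$ and $M$. The only cosmetic difference is that where the paper invokes \cref{threelines} to see that $M$ meets a single line in each $H_i\cap S$, you unpack that lemma directly via the tangent plane at $p_i$; and you spell out the converse direction and the distinctness of the $L_{i,j_i}$ more explicitly than the paper does.
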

\begin{proof}
	Fix two skew lines $L$ and $M$ on $S$. By \cref{planes around a line}, $L$ lies in $(d-1)^2 +1$ planes containing other lines. Every line on $S$ intersecting $L$ must lie in one of those planes, and all lines in those planes intersect $L$. The line $M$ is skew to $L$, so in none of those planes. This means $M$ meets each of these planes. But  
 $M$ must intersect a single line in each plane by \cref{threelines}. The result follows. 
\end{proof}

\ifIdiotsGuide
\textcolor{teal}{Interestingly, we can show that: If $S$ is a smooth surface of degree $d>3$, then 
\begin{enumerate}
\item For any two skew lines  on $S$, there are \textbf{at most} $(d-1)^2 +1$ lines on $S$ intersecting both;
\item There are most $d$ through 3 skew lines (a degree 1 bound), 
\item There are at most $(d-1)^2+1$ through 2 skew lines (a degree 2 bound);
\item  There are  at most $((d-1)^2+1)(d-1)$ through 1 skew lines (a degree 3 bound);
\item  There are  at most $d^2(d^2-3d+3)$ through 0 skew lines (a degree 4 bound).
\end{enumerate}
}
\fi

\section{Numerical Star Chord Pair Configurations }\label{uniqueness}

 The configuration of lines on an extremal surface  of degree $d$ has several strong properties. 
In this section, we use a combinatorial result of  Thas to show that a smooth surface in 3-space with "maximal lines" admits a special configuration of lines that we call a {\it numerical star chord configuration} that always appear on extremal surfaces. In the next section, we will use this fact to show that surfaces with maximal lines {\it are}, in fact, extremal.
 
\begin{definition}\label{num star chord config} Let $S\subseteq \mathbb P^3$ be a surface of degree $d$. A {\bf numerical star chord configuration} on $S$ is 
a collection of  $d^2$ lines on $S$ that can be partitioned into $d$ sets of $d$ coplanar lines in {\it two different ways}.  \end{definition}

The purpose of this section is to prove the following crucial ingredient in the proof of \cref{main}, which will follow by combining our \cref{planes around a line} and Thas's \cref{thasCorollary}:

\begin{corollary}\label{has num star chord2} Let $S\subseteq \mathbb P^3$ be a smooth surface of degree $d\geq 3$.
If $S$ contains $d^2(d^2-3d+3)$ lines---the maximal possible by \cref{planes around a line}--- then $d-1$ is a prime power and $S$ contains a numerical star chord pair configuration.
\end{corollary}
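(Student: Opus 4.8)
The plan is to package the combinatorial content of \cref{planes around a line} into an incidence structure on the set of lines of $S$, observe that this structure satisfies the hypotheses of the combinatorial theorem of Thas (\cref{thasCorollary}), and then read off from Thas's conclusion both that $d-1$ is a prime power and that $S$ carries the desired configuration.

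\emph{Setting up the incidence structure.} Suppose $S$ contains the maximal number $d^2(d^2-3d+3)$ of lines. First note $S$ must contain two skew lines: otherwise all lines of $S$ lie in a single plane by \cref{threelines}, so there are at most $d$ of them, far fewer than $d^2(d^2-3d+3)$ when $d\geq 3$. By \cref{planes around a line}, each line of $S$ lies in exactly $d^2-2d+2$ planes containing a second line of $S$, and each such plane --- call it a \emph{special plane} --- meets $S$ in exactly $d$ distinct lines. I would form the incidence structure $\Gamma$ whose points are the lines of $S$ and whose blocks are the special planes, with incidence given by containment. Then every block of $\Gamma$ has exactly $d$ points and every point of $\Gamma$ lies on exactly $d^2-2d+2=(d-1)^2+1$ blocks; by \cref{threelines}, two distinct points of $\Gamma$ lie on a common block exactly when the two corresponding lines of $S$ meet, and then that block is unique (it is the plane they span), so $\Gamma$ is a partial linear space. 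I would also record the regularity facts already proved: each point of $\Gamma$ is collinear with exactly $(d-1)\bigl((d-1)^2+1\bigr)$ other points (\cref{max valence}), and any two \emph{non-collinear} points of $\Gamma$ have exactly $(d-1)^2+1$ common neighbours (\cref{maxbetweenskewlines}).

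\emph{Invoking Thas and reading off the conclusion.} The structure $\Gamma$, with $d$ points per block, $(d-1)^2+1$ blocks per point, and the constant common-neighbour count on non-collinear pairs recorded above, is an instance of the configuration treated by Thas's \cref{thasCorollary}. Writing $q=d-1$, that theorem then yields that $q$ is a prime power and that $\Gamma$ contains two "chords", i.e.\ two distinct families $\mathcal F$ and $\mathcal F'$, each consisting of $d$ pairwise disjoint blocks of $\Gamma$, and both with the same underlying set of $d^2$ points. Translating through the definition of $\Gamma$, this is a set $\mathcal C$ of $d^2$ lines of $S$ together with two genuinely different partitions of $\mathcal C$ into $d$ sets of $d$ coplanar lines (given by $\mathcal F$ and by $\mathcal F'$). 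By \cref{num star chord config} this is exactly a numerical star chord pair configuration on $S$, and since $d-1=q$ is a prime power, the corollary follows.

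\emph{Expected main obstacle.} The real work lies in the matching step: one must verify that \cref{planes around a line}, supplemented by \cref{max valence} and \cref{maxbetweenskewlines}, supplies precisely the axioms that Thas's combinatorial theorem requires, since those axioms are phrased as regularity conditions on an abstract point-line geometry --- numbers of blocks through pairs of points, and the structure of the neighbourhood of a point --- rather than in the incidence language of lines and planes in $\mathbb P^3$; pinning down the exact normalization of parameters is where I expect to spend the most care. A secondary, more routine obstacle is the reverse translation: checking that the two block-families Thas produces are distinct and that each of them genuinely \emph{partitions}, rather than merely covers, the set of $d^2$ lines --- both of which reduce to \cref{threelines} together with the fact that a plane meeting $S$ in at least two lines of $S$ meets it in exactly $d$ of them.
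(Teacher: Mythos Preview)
Your overall architecture is right --- set up the incidence structure on lines and special planes, invoke Thas, and transfer a configuration from the Hermitian model --- but there is a genuine gap at the point where you apply \cref{thasCorollary}. The facts you assemble ($d$ points per block, $(d-1)^2+1$ blocks per point, and $(d-1)^2+1$ common neighbours for any non-collinear pair) only establish that $\Gamma$ is a generalized quadrangle of order $(d-1,(d-1)^2)$; the common-neighbour count is automatic in any GQ with those parameters and is \emph{not} the hypothesis of Thas's theorem. What \cref{thasCorollary} actually requires is that every \emph{triad} be $3$-regular in the sense of \cref{3-reg}: given three pairwise skew lines $L_1,L_2,L_3$ on $S$, one must extend them to $d$ skew lines $L_1,\dots,L_d$ such that every line of $S$ meeting $L_1,L_2,L_3$ also meets $L_4,\dots,L_d$. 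Nothing in \cref{planes around a line}, \cref{max valence}, or \cref{maxbetweenskewlines} gives this, and it is not a formal consequence of the GQ axioms. In the paper this is the content of \cref{quadrics}: one passes to the smooth quadric $Q$ through $L_1,L_2,L_3$, and uses the plane-section structure of \cref{planes around a line} together with a pigeonhole count to force $Q\cap S$ to split as a union of $2d$ lines, $d$ in each ruling. That argument is the substantive missing step in your proposal; your ``expected main obstacle'' paragraph underestimates it by describing the matching as a normalization of parameters rather than a new geometric lemma.

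A smaller point: \cref{thasCorollary} as stated does not hand you ``two chords'' or two partition families directly. Its conclusion is that $\Gamma$ is isomorphic, as an abstract incidence structure, to the dual of the Hermitian quadrangle, which by \cref{dualgq} is the line/star-plane quadrangle of an extremal surface (\cref{Fermat gq}). The paper then imports the numerical star chord pair configuration from the known geometry of extremal surfaces \cite[Thm~3.1.4]{Brosowsky+Page+Ryan+Smith} and transports it across the isomorphism. Your final translation step reaches the right destination, but the configuration comes from the extremal model, not from the statement of Thas's theorem itself.
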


Notice that \cref{has num star chord2} does {\it not} guarantee that $d-1$ is a power of the characteristic of the ground field. Indeed, \cref{cubic} shows that this need not be the case.
In
 \cref{last step},  we will show that when $d\geq 4$, under the hypothesis of \cref{has num star chord2},   $d-1$ is a power of the characteristic and the surface is extremal.

\medskip

The term ``numerical star chord pair configuration" is motivated by the following example. 

\begin{example}\label{star chord}\cite[Section 3]{Brosowsky+Page+Ryan+Smith}
Let $X\subseteq \mathbb P^3$ be an extremal surface of degree $d$.  Then whenever two lines on $X$ intersect at a point $p$, the plane they span (which is necessarily $T_pX$) intersects $X$ in a "star"---a union of $d$ coplanar lines through $p$. This plane is called a {\it star plane} and the point $p$ is called a {\it star point.}\footnote{When $d=3$, these are called Eckardt points. {Star points are also defined in greater generality in \cite{Cools-Coppens}.}}
If $p$ and $p'$ are two star points spanning a line $\ell$  {\it not} on $X$, then the line  $\ell$  is said to be a {\it star chord} of $X$. Every star chord intersects $X$ in 
 $d$ distinct points  $p_1, \dots p_d$, 
all of which turn out to be star points (two of which are $p$ and $p'$). The set 
\begin{equation}\label{partition}
\{ X \cap T_{p_1}X \} \cup \{ X \cap T_{p_2}X \} \cup \cdots \cup \{ X \cap T_{p_d}X \}
\end{equation}
consists of $d^2$ lines in $X$ partitioned into $d$ different sets of $d$ co-planar lines, namely the $d$ stars with star points $p_1, \dots p_d$.  
On the other hand, there is another way to partition these same $d^2$ lines into $d$ sets of $d$ coplanar lines. Namely, the star planes $T_pX$ and $T_{p'}X$ intersect in a star chord $\ell'$, which intersects $X$ in $d$ distinct star points $q_1, q_2, \dots, q_d$, none of which is any $p_i$.  The collection of $d^2$ lines
\begin{equation}\label{partition2}
\{ X \cap T_{q_1}X \} \cup \{ X \cap T_{q_2}X \} \cup \cdots \cup \{ X \cap T_{q_d}X \}
\end{equation}
is a {\it different} partition of the same $d^2$ lines \eqref{partition} into $d$ sets of $d$ coplanar lines.\footnote{As it turns out, $\displaystyle \cap_{i=1}^d T_{q_i}X=\ell$ and  $\displaystyle \cap_{i=1}^d T_{p_i}X=\ell'$
so the star chord pair $(\ell, \ell')$ is independent of the original choice of star points $p$ and $p'$; see \cite[Section 3]{Brosowsky+Page+Ryan+Smith}.}
We call these $d^2$ lines, together with the two partitions \eqref{partition} and \eqref{partition2},  a {\it star chord pair configuration;} it is a special case of a numerical star chord pair configuration.
\end{example}

\begin{remark}
Surfaces in $\mathbb P^3$  containing star chord configurations are a topic of considerable independent interest.  Any surface---extremal or not--- defined by $\phi(x, y)=\psi(z, w)$, where $\phi$ and $\psi$ are reduced forms of the same degree,  contains a star chord pair configuration  \cite{Caporaso+Harris+Mazur,Boissiere+Sarti,Xiao}. See also \cite[p116]{Hirschfeld}. \end{remark}

\begin{remark} There exist configurations of lines on smooth surfaces that are 
numerical  but not actual star chord pair configurations; see \cref{cubic} for examples on cubics.  Taking the pencil of planes between two surfaces of degree $d\geq 4$ each defined as the union of $d$ general planes, we see that the generic element of the pencil is an irreducible 
degree $d$ surface containing a numeric star chord configuration which is not a star chord pair configuration.

\ifIdiotsGuide
\textcolor{teal}{
Consider $2d$ general planes $\{H_1,\dots,H_d\}\cup \{K_1,\dots,K_d\}$ in $\mathbb{P}^3$, and let $L_{i,j} = H_i\cap K_j$.
If these lines lie on a surface of degree $d$, then they form a numerical star chord pair configuration which is not a star chord pair configuration.
If $H_i = \mathbb{V}(\ell_i)$ and $K_i = \mathbb{V}(m_i)$ then all of the lines lie on every degree $d$ surface in the pencil $\{s\prod_{i=1}^d \ell_i +t\prod_{i=1}^d m_i\,\, | \,\, [s:t]\in \mathbb P^1\}$. Are any of these smooth?}
\fi
\end{remark}

\begin{remark}\label{skew prop}
Suppose a smooth surface $S\subseteq \mathbb P^3$ of degree $d$ admits a numerical star chord configuration consisting of $d^2$ lines on $S$  partitioned by the  planes
$\{H_1, \dots, H_d\} $ and $\{K_1, \dots, K_d\}$.  In this case, the  $d^2$ lines on $S$ are precisely $L_{i,j} = H_i \cap K_j$ as $i$ and $j$ run through the set $\{1, 2, \dots, d\}$. 
Observe that
\begin{enumerate}
\item[(a)]
 The lines $H_i\cap H_j$ (respectively $K_i \cap K_j$) do not lie on $S$---that is, they are {\it chords};
 \item[(b)] The lines $L_{i,j}$ and $L_{m,n}$ are skew if $i\neq m$ and $j\neq n$.
\end{enumerate}
Indeed, (a) follows because otherwise the line $H_i\cap H_j$ would lie in both $H_i\cap S$ and $H_j\cap S$, contrary to the fact that $\{H_1, \dots, H_d\} $ {\it partitions} the lines.
\ifIdiotsGuide
\textcolor{teal}{Indeed, since $d$ planes, each of which can contain at most $d$ lines of $S$, partition exactly $d^2$ lines of $S$, they cannot share any lines of $S$.}
\fi
For (b), observe that   $L_{i,n}$ and $L_{m,n} $ are both in the plane $K_n$, and that $L_{i,j}$  intersects $L_{i,n}$.   
If $L_{i,j}$  intersects $L_{m,n}$  as well, then $\{L_{i,n}, L_{m,n}, L_{i,j}\}$ would pairwise intersect, so 
 $L_{i,j}$ would lie  in the plane $K_n$ by \cref{threelines}.
 Since $L_{i,j}$ is also in $K_j,$ this would imply that $L_{i,j} = K_n\cap K_j$, contrary to (a).
\end{remark}

\begin{example}\label{cubic}
Smooth cubic surfaces always contain 27 (which is $d^2(d^2-3d+3)$) lines, so by  \cref{has num star chord2}, they contain a numerical star chord pair configuration. 
Representing  a cubic surface  $S$ as the blow up of $\mathbb{P}^2$ at six points $p_1, \dots p_6$,  let $E_i$ denote the exceptional divisors, $C_i$ the strict transforms of conics through all $p_j$ with $j\neq i$, and $L_{ij}$  the strict transform of the line through $p_i$ and $p_j$.
Then one readily confirms that
\[\{E_1,C_2,L_{12}\}\cup \{E_3,C_4,L_{34}\}\cup\{L_{14},L_{23},L_{56}\}\]
\[\{E_1,C_4,L_{14}\}\cup \{E_3,C_2,L_{23}\}\cup\{L_{12},L_{34},L_{56}\}\]
is numerical star chord configuration on $S$. In fact, one can show that up to choosing the six (ordered) points to blow up, every numerical star chord pair configuration is of this type.

\ifIdiotsGuide
\textcolor{teal}{{\sc Proof of fact that all are of this type:}
Start with a numerical star chord pair configuration on a smooth cubic.
In particular, there are two skew lines in the configuration, we want to show first that up to labeling they are $E_1$ and $E_3$.\medskip\\
The intersection matrix of lines on any smooth cubic is the same as that on an extremal cubic.
Since the automorphism group of the extremal cubic is transitive on sets of three skew lines \cite{Brosowsky+Page+Ryan+Smith}, any pair of skew lines on a smooth cubic has the same intersection properties as $E_1$ and $E_3$.
In particular, every pair of skew lines can be extended to a set of six skew lines, which is then a part of a double six.
Therefore, up to labeling, any pair of skew lines is $E_1$ and $E_3$.
Thus, we can assume without loss of generality that $E_1$ and $E_3$ are in the configuration.\medskip \\
The only sets of three lines lying in a plane containing $E_i$ are of the form $\{E_i,C_j,L_{ij}\}$.
Picking the planes $\{E_1,C_2,L_{12}\}$ and $\{E_3,C_4,L_{34}\}$ is then just a labeling.\medskip\\
Once you have picked two planes, you have determined the numerical star cord so the claim follows.\medskip\\
}
\fi
\end{example}

\begin{caution}
Our definition of a {\it numerical} star chord configuration does {\it not} require that the sets of $d$ co-planar lines making up either partition form a star, as in \cref{star chord}. Choosing a sufficiently general cubic surface (one with no Eckardt points),  none of the coplanar sets of lines in \cref{cubic} form stars.
 \end{caution}

\subsection{Generalized quadrangles}\label{sec: num star chords} 
Generalized quadrangles were introduced by Tits in his study of simple groups of Lie type of rank 2 \cite{Tits}, and quickly became a popular research topic in combinatorics; see \cite{PayneThasBook} for a general reference. 
Our proof of \cref{has num star chord2} will make use of a characterization of certain generalized quadrangles due to Thas. We now briefly summarize the necessary machinery to state it.

An {\it incidence structure} is a triple $(P, B, I)$, where  $P$  and $B$ are sets (typically called  "points" and "lines" respectively\footnote{We will avoid the traditional use of "point'' and "line" as they would be confusing in our context.}) together with a relation 
$I\subseteq P \times B$ which determines when $p\in P$ is incident to $b\in B$. 
Two incidence structures  $(P,B,I)$ and $(P',B',I')$ are isomorphic if there are bijections $P\rightarrow P'$ and $B\rightarrow B'$ identifying $I$ with $I'$.

\begin{definition} \label{GenQuad} {\cite[p1]{PayneThasBook}}
A \textit{generalized quadrangle} with parameters $(s,t)$ is an incidence structure $(P,B,I)$ satisfying the following conditions: 
\begin{enumerate}
\item For all $x,x'\in P$, there is at most one $b\in B$ such that $(x,b),(x',b)\in I$.
\item For all $b,b'\in B$, there is at most one $x\in P$ such that $(x,b),(x,b')\in I$.
\item For all  $(x,b)\not\in I$, there exist unique  $x' \in P$ and $b' \in  B$ such that $(x',b), (x,b'),(x',b')\in I$.
\item For all $b\in B$, there are exactly $s + 1$ elements $x \in P$ with $(x,b)\in I$ . 
\item For all $x\in P$, there are exactly $t + 1$ elements $b \in B$ with $(x,b)\in I$ . 
\end{enumerate}
\end{definition}

Of particular interest in incidence geometry is the \textit{Hermitian generalized quadrangle}:

\begin{example}\label{Hermitian gq} \cite[3.1.1(ii)]{PayneThasBook}
Let $X=\mathbb V(x^{p^e+1} +y^{p^e+1} + z^{p^e+1} +  w^{p^e+1})\subseteq \mathbb P^3$ be the Hermitian surface (of degree $p^e+1$) defined over $\overline{\mathbb F_p}$.  
Let $P'$ be the set of $\mathbb F_{p^{2e}}$-points and let $B'$ be the set of lines on $X$ defined over $\mathbb F_{p^{2e}}$, with $I$ the natural incidence relation of points lying on lines.
Then $(P', B', I')$ is a generalized quadrangle with parameters $(p^{2e}, p^e)$ called the {\bf Hermitian  generalized quadrangle}.

 Equivalently, we can take $P'$ to be the  set of star points and $B'$ to be the set of  lines on {\it any}  smooth extremal surface $X$ degree $p^e+1$, since up to choice of projective 
 coordinates, $X$ can be assumed the Hermitian surface $\mathbb V(x^{p^e+1} +y^{p^e+1} + z^{p^e+1} +  w^{p^e+1})$ in which case the 
star points of a Hermitian surface are precisely its $\mathbb F_{p^{2e}}$-points;
see  \cite[Prop 2.5.1]{Brosowsky+Page+Ryan+Smith} or  \cite{Segre, Hirschfeld, Shi01}. \end{example} 

For our argument, we are interested in a {\it different}  generalized quadrangle associated to an extremal surface:

\begin{example}\label{Fermat gq}  Let $X$ be an extremal surface of degree $p^e+1$ in $\mathbb P^3$, let $P$ be the set of lines on $X$, and let $B$ be the set of star planes on $X$
(as defined in \cref{star chord} or \cite[Prop 2.5.1]{Brosowsky+Page+Ryan+Smith}). Defining incidence in the natural way, 
 $$I =\{(L, \Lambda)\,\, | \,\, L\subseteq \Lambda\}\subseteq P\times B, $$ 
 $(P, B, I)$ is a generalized quadrangle  with parameters $(s, t) = (p^e,p^{2e})$.
This is immediate from  \cite[Thm 3.3.1a]{Brosowsky+Page+Ryan+Smith}\ (but see also {\it e.g.} \cite{Hirschfeld} or \cite{Shi01}). In any case, it  is a special case of the {\it{a priori}} more general example given by the next result:
\end{example}

\begin{proposition}\label{gq}
Let $S\subseteq \mathbb P^3$ be a smooth surface of degree $d$ containing  $d^2(d^2-3d+3)$ lines.
Let $P$ be the set of lines on $S$, and let $B$ be the set of planes containing $d$ lines of $S$. With incidence defined the natural way, 
$(P,B,I)$ forms a generalized quadrangle with parameters $(d-1,(d-1)^2)$.
\end{proposition}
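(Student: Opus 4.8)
The plan is to verify the five axioms of Definition \ref{GenQuad} one at a time, taking $P$ to be the set of lines on $S$ and $B$ to be the set of planes containing exactly $d$ lines of $S$, with incidence $I = \{(L,\Lambda) : L \subseteq \Lambda\}$. First I would record the combinatorial data supplied by the earlier results: by \cref{planes around a line}, every line $L$ on $S$ lies in exactly $d^2-2d+2 = (d-1)^2 + 1$ planes each containing $d$ lines of $S$; and by definition each such plane contains exactly $d$ lines of $S$. These two facts are exactly axioms (4) and (5): a plane $\Lambda \in B$ is incident to $s+1 = d$ lines, so $s = d-1$, and a line $L \in P$ is incident to $t+1 = (d-1)^2 + 1$ planes, so $t = (d-1)^2$. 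So axioms (4) and (5) are essentially immediate.

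Next I would handle axioms (1) and (2), which are the "no digon" conditions and follow from elementary projective geometry together with \cref{threelines}. For (2): given two distinct planes $\Lambda, \Lambda' \in B$, any line of $S$ contained in both lies in $\Lambda \cap \Lambda'$, which is a single line of $\mathbb{P}^3$; hence there is at most one such line, so at most one $L \in P$ incident to both. For (1): given two distinct lines $L, L'$ on $S$, any plane containing both must contain the span of $L \cup L'$; if $L$ and $L'$ are skew they span all of $\mathbb{P}^3$ and no such plane exists, while if they meet they span a unique plane, so there is at most one $\Lambda \in B$ incident to both. (One should note that the unique spanning plane, when $L \cap L' \neq \emptyset$, need not lie in $B$ — axiom (1) only asserts "at most one", so this is fine.)

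The main work is axiom (3): given $(L, \Lambda) \notin I$ — i.e. a line $L$ on $S$ not contained in the plane $\Lambda \in B$ — I must produce a \emph{unique} line $L'$ on $S$ and plane $\Lambda' \in B$ with $L' \subseteq \Lambda$, $L \subseteq \Lambda'$, and $L' \subseteq \Lambda'$. The idea: since $\Lambda \cap S$ consists of $d$ lines of $S$ and $L \not\subseteq \Lambda$, the line $L$ meets the plane $\Lambda$ in a single point $q$, and $q$ must lie on at least one of the $d$ lines of $\Lambda \cap S$ — but I must argue it lies on \emph{exactly} one. If $q$ lay on two distinct lines $M_1, M_2$ of $\Lambda \cap S$, then $L, M_1, M_2$ pairwise meet at $q$, so by \cref{threelines} they are coplanar; that plane is $T_q S$, and it contains $M_1, M_2$, hence equals $\Lambda$, forcing $L \subseteq \Lambda$, a contradiction. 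So $q$ lies on a unique line $L' := M$ of $\Lambda \cap S$, and $L'$ meets $L$. Now $L$ and $L'$ meet, so they span a unique plane $\Lambda'$; since $L$ lies on $S$ and has $(d-1)^2+1$ planes of $B$ through it by \cref{planes around a line}, I need $\Lambda'$ to be one of them, i.e. to contain $d$ lines of $S$. Here I would invoke the "moreover" clause of \cref{planes around a line}: \emph{every} plane through $L$ that contains another line of $S$ contains exactly $d$ lines of $S$ — and $\Lambda'$ does contain another line of $S$, namely $L'$. Hence $\Lambda' \in B$, and $(L', \Lambda')$ has all the required incidences. Uniqueness: if $(L'', \Lambda'')$ is another such pair, then $L'' \subseteq \Lambda$ and $L'' $ meets $L$ (it is coplanar with $L$ in $\Lambda''$), so $L''$ passes through $L \cap \Lambda = q$; by the uniqueness just established $L'' = L'$, and then $\Lambda''$ contains both $L$ and $L'$, so $\Lambda'' = \Lambda'$ by axiom (1). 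This completes axiom (3).

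The step I expect to be the genuine obstacle is the uniqueness half of axiom (3), and in particular making airtight the claim that the point $q = L \cap \Lambda$ lies on exactly one line of $\Lambda \cap S$ — both the \emph{existence} (that $q$ is on some line of $\Lambda \cap S$: one must rule out the possibility that $L \cap \Lambda$ avoids all $d$ lines, which uses that $\Lambda \cap S$ is a degree-$d$ plane curve equal to the union of those $d$ lines, so $q \in \Lambda \cap S \subseteq L$ forces $q$ onto one of the components) and the \emph{uniqueness} via \cref{threelines} and smoothness. Everything else is bookkeeping against \cref{planes around a line}.
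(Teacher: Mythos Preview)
Your proposal is correct and follows essentially the same argument as the paper's proof: axioms (1) and (2) come from elementary projective geometry, axioms (4) and (5) are read off directly from \cref{planes around a line}, and axiom (3) is handled by intersecting $L$ with $\Lambda$ to get a point on a unique line $L'\subseteq \Lambda\cap S$ (via \cref{threelines}), then invoking the ``moreover'' clause of \cref{planes around a line} to see that the plane spanned by $L$ and $L'$ lies in $B$. Your write-up is simply more explicit than the paper's about the existence and uniqueness details in axiom (3); note a small typo in your existence clause, where ``$q\in \Lambda\cap S\subseteq L$'' should read ``$q\in L\subseteq S$ and $q\in\Lambda$, hence $q\in \Lambda\cap S$.''
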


\begin{proof} This is  a corollary of  \cref{planes around a line}.
Conditions (1) and (2) hold as any two planes share at most one line and any two lines lie in at most one plane in $\mathbb P^3$.
Condition (3) holds because if a line $L$ on $S$ is not contained in a plane $H\in B$, then $L\cap H$ is a point on $S$, which must lie on a unique  line  $M \subseteq H\cap S$ (by \cref{threelines}); in this case, the plane spanned by $L$ and $M$ is in $B$ by  \cref{planes around a line}.  
Conditions (4) and (5) are the content of the second part of  \cref{planes around a line}.
\end{proof}

The \textit{dual} of an incidence structure $(P,B,I)$ is the incidence structure $(B,P,I^\vee)$ where $(b,p)\in I^\vee$ if and only if $(p,b)\in I$. The dual of a generalized quadrangle with parameters $(s, t)$ is a generalized quadrangle with parameters $(t, s)$.

\begin{proposition}\label{dualgq} Let $X\subseteq \mathbb P^3$ be a smooth extremal surface.  
Then the following  generalized quadrangles associated to $X$   are dual:
\begin{itemize}
\item[(i)] \cref{Fermat gq}, in which  $P$ consists of lines on $X$ and $B$ consists of star planes on $X$; 
\item[(ii)]
\cref{Hermitian gq}, in which  $P'$ consists of star points on $X$ and $B'$ consists of lines on $X$. 
\end{itemize} \end{proposition}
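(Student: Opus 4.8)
The plan is to exhibit an explicit incidence-preserving bijection between the objects of the two generalized quadrangles, sending the "points" of one to the "lines" of the other and vice versa. Concretely, for a smooth extremal surface $X$ of degree $d = p^e+1$, I must match the set of lines on $X$ (which is $P$ in \cref{Fermat gq} and $B'$ in \cref{Hermitian gq}) with itself via the identity, and match the set of star planes on $X$ (which is $B$ in \cref{Fermat gq}) with the set of star points on $X$ (which is $P'$ in \cref{Hermitian gq}). So the real content is: there is a natural bijection $\Lambda \mapsto p_\Lambda$ between star planes and star points that is compatible with incidence, in the sense that a line $L$ lies in a star plane $\Lambda$ if and only if the corresponding star point $p_\Lambda$ lies on $L$.

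First I would recall from \cref{star chord} (and \cite[Prop 2.5.1]{Brosowsky+Page+Ryan+Smith}) the defining property of extremal surfaces: whenever two lines on $X$ meet at a point $p$, the plane they span equals $T_pX$ and meets $X$ in a star of $d$ coplanar lines through $p$; conversely every star plane is the tangent plane at its unique star point, which is the common intersection point of the $d$ lines it contains. This already gives a well-defined map in both directions: a star plane $\Lambda$ determines its star point $p_\Lambda$ (the point through which all $d$ lines of $\Lambda \cap X$ pass), and a star point $p$ determines the star plane $T_pX$; these are mutually inverse, so the assignment is a bijection. Next I would verify the incidence compatibility: if $L$ is a line on $X$ contained in the star plane $\Lambda$, then $L$ is one of the $d$ lines of the star, hence passes through $p_\Lambda$, so $(p_\Lambda, L) \in I'$; conversely if $p$ is a star point lying on a line $L \subseteq X$, then $L \subseteq T_pX$, i.e.\ $L$ lies in the star plane corresponding to $p$. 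This is exactly the statement that the bijection $(P, B) \to (B', P')$ carries $I$ to $I^\vee$.

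Finally I would note the consistency check on parameters: \cref{Fermat gq} has parameters $(p^e, p^{2e})$ and \cref{Hermitian gq} has parameters $(p^{2e}, p^e)$, which are interchanged as they must be for dual generalized quadrangles — this is consistent with, though not needed for, the argument. The main obstacle, such as it is, is purely bookkeeping: one must be careful that "star plane" and "star point" are genuinely in bijection, i.e.\ that no plane is tangent at two distinct star points and no point is the star point of two distinct planes. Both follow from smoothness of $X$ (the tangent plane at a smooth point is unique) together with the fact, recalled above, that a star plane meets $X$ in $d$ concurrent lines whose common point is forced. There is no hard estimate or combinatorial input here; the proposition is essentially an unwinding of the dictionary between the two descriptions, so I expect the write-up to be short.
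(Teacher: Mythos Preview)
Your proposal is correct and follows essentially the same approach as the paper: the paper's proof simply invokes the bijection $p \leftrightarrow T_pX$ between star points and star planes together with the basic geometry of extremal surfaces from \cite{Brosowsky+Page+Ryan+Smith}, and your argument is just a careful unpacking of exactly that bijection and its incidence compatibility. The paper's write-up is a single sentence, so your more detailed version is, if anything, more explicit than what is required.
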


\begin{proof}
This follows immediately from the bijection between star points and star planes given by $p\leftrightarrow T_PX$ and the basic properties of the geometry of extremal surfaces proved, {\it e.g.}, in 
\cite{Brosowsky+Page+Ryan+Smith}. 
\end{proof}

A key step in the proof of  the main result of this section, \cref{has num star chord2}, will be to show that the generalized quadrangle of \cref{gq} is {\it also} dual to the Hermitian generalized quadrangle, and therefore isomorphic to the generalized quadrangle of lines and star planes on an extremal surface (\cref{Fermat gq}). To do so, we use a combinatorial characterization of the dual to the Hermitian generalized quadrangle due to Thas. To state this result, we need a few more definitions.

\begin{definition} \label{3-reg} Let $(P, B, I)$ be a generalized quadrangle with parameters $(s, t)$. 
\begin{itemize}
\item [(i).]
A \textit{triad} is a set of three elements $x_1, x_2, x_3 \in P$ such that if $(x_i,b) \in I$, then $(x_j,b)\not\in I$ for all $j\neq i$  \cite[p. 2]{PayneThasBook}.
\item [(ii).] 
A triad $\{x_1,x_2,x_3\}$ is \textit{3-regular} if it can be extended to a set of  $s+1$ elements $x_i\in P$ such that, whenever $y \in P$ is coincident with $x_1, x_2$ and $x_3$, then also $y$ is coincident with $x_4, \dots, x_{s+1}$; explicitly, this means that  if for  $i=1,2$ and $3$, there exists $b_i\in B$ such that 
 $(x_i,b_i),(y,b_i) \in I$, then also 
 there exists $b_i\in B$   for each $i= 4, 5, \dots, s+1$,  such that $(x_i,b_i),(y,b_i)\in I$   \cite[p. 4]{PayneThasBook}, \cite[Page 143]{Thas-94}. 
\end{itemize}
\end{definition}

\smallskip

\begin{theorem}\label{thasCorollary} \cite{Thas78} \cite{Thas-94} Let $(P, B, I)$ be a generalized quadrangle  with parameters $(q, q^{2})$  for some positive integer $q>1$.
If every triad of $(P, B, I)$ is 3-regular, 
then $(P,B,I)$  is isomorphic to the dual of the Hermitian generalized quadrangle. In particular, $q$ is a prime power.
\end{theorem}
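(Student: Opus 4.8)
The natural approach is to reconstruct a classical ambient geometry from the combinatorics of the 3-regular triads, which is in essence the strategy behind Thas's combinatorial characterizations of the classical generalized quadrangles. Fix a generalized quadrangle $\mathcal{Q}=(P,B,I)$ of order $(q,q^{2})$ with $q>1$ in which every triad is 3-regular. The first step is the standard counting fact that in any generalized quadrangle of order $(s,s^{2})$ with $s>1$ every triad of pairwise non-collinear elements of $P$ has exactly $s+1$ common centers; this comes from a Higman-type eigenvalue inequality together with the divisibility constraints forced by $t=s^{2}$. Applied with $s=q$ and combined with the 3-regularity hypothesis, this says that for every triad the \emph{span} $\{x_{1},x_{2},x_{3}\}^{\perp\perp}$ (the set of elements of $P$ collinear with every common center) also has exactly $q+1$ elements; call these $(q+1)$-subsets of $P$ the \emph{hyperbolic lines}, and record that any two points lying on a common hyperbolic line ought to determine it.

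The second and decisive step is to show that the points of $P$, the traces $\{x,y\}^{\perp}$, and the hyperbolic lines organize into an incidence geometry isomorphic to $PG(3,q^{2})$ equipped with a non-degenerate Hermitian polarity (equivalently, $PG(5,q)$ with an elliptic quadric), in such a way that the lines of $\mathcal{Q}$ become the totally isotropic lines; this exhibits $\mathcal{Q}$ as the dual of $H(3,q^{2})$. Concretely one would establish, in order: through each point of $P$ the hyperbolic lines form a pencil carrying the structure of a projective plane; any two points lie on a unique hyperbolic line; three pairwise meeting hyperbolic lines are concurrent or coplanar (a Veblen–Young axiom); and the resulting linear space satisfies enough incidence axioms that, by the Veblen–Young theorem, it is a projective space $PG(3,K)$ over a division ring $K$. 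Finiteness then forces $K$ to be a finite field, the commutativity/reciprocity built into the Hermitian structure (and finiteness via Wedderburn in any case) makes it $\mathbb{F}_{q^{2}}$ once the cardinalities are matched against the known parameter count of $\mathcal{Q}$; in particular $q^{2}$, hence $q$, is a prime power. A final synthetic check identifies the sublines and points coming from $\mathcal{Q}$ with the absolute lines and points of the Hermitian polarity, completing the isomorphism with the dual Hermitian generalized quadrangle.

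The hard part is the middle step: verifying that the purely combinatorial hypotheses — 3-regularity of every triad, on top of the four generalized-quadrangle axioms — genuinely force the configuration axioms needed to invoke Veblen–Young, and in particular rule out degenerate low-dimensional or non-incidence-geometric possibilities. This is where Thas's argument does its real work, through a delicate case analysis of how traces and spans intersect and of the affine structure obtained by deleting a point; I would expect to follow that analysis rather than find a shortcut. Once the ambient $PG(3,q^{2})$ and its Hermitian form are in hand, the conclusions that $q$ is a prime power and that $\mathcal{Q}$ is the dual Hermitian generalized quadrangle are formal.
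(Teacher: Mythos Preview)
The paper does not prove this theorem at all: its ``proof'' is a paragraph of citations, pointing to \cite[5.3.2.i]{PayneThasBook} (attributed there to Thas \cite{Thas78}) for the identification with $Q(5,q)$, to \cite[3.2.3]{PayneThasBook} for the fact that $Q(5,q)$ is dual to the Hermitian quadrangle, and to \cite[Cor.~3.1.4]{Thas-94} for the prime-power conclusion. The statement is being quoted as a black box from the incidence-geometry literature, not reproved.

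Your proposal instead sketches the \emph{content} of Thas's argument---building hyperbolic lines from spans of 3-regular triads, assembling these into a linear space, invoking Veblen--Young to recognize a projective space over a finite field, and then matching the resulting polarity with the Hermitian one. That is broadly the right shape of the classical proof, and you are honest that the delicate middle step (verifying the incidence axioms from the combinatorics) is where the real work lies and that you would follow Thas rather than shortcut it. So your outline is not wrong, but it is doing vastly more than the paper intends: for the purposes of this paper the theorem is an imported result, and the expected ``proof'' is simply the string of references above.
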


\begin{proof} This is essentially  \cite[5.3.2.i]{PayneThasBook}, where the result is attributed to Thas in \cite{Thas78}. Note that the notation $Q(5, q)$ appearing in that statement is defined in 
 \cite[3.1.1]{PayneThasBook},  and shown to be isomorphic to the dual of the Hermitian quadrangle in  \cite[3.2.3]{PayneThasBook}, whenever $q$ is a prime power; that $q$ is a prime power is established in \cite[Corollary 3.1.4]{Thas-94}.
 \end{proof}

\subsection{Quadric Configurations}
In the generalized quadrangle $(P, B, I) $ of  a surface with maximal lines (defined in \cref{gq}), triads and 3-regularity have natural  geometric interpretations. For example, 
a \textit{triad} of elements in $P$ is a set of three skew lines on $S$.
A set of three skew lines is \textit{3-regular} if it can be extended to a set of  $d$ lines on $S$, each of which intersects every line on $S$ intersecting all three initial lines.   Our next step is to show that all triads in this generalized quadrangle are 3-regular. For this, we use the idea of a quadric configuration:

\begin{definition}\cite[Def 4.0.1]{Brosowsky+Page+Ryan+Smith} Let $S\subseteq \mathbb P^3$ be a smooth surface of degree $d\geq 3$. A {\it quadric configuration}  is a collection of $2d$ lines on $S$ partitioned into two sets of $d$ skew lines with the property that each line in either set meets every line of the other set.  

Equivalently, a quadric configuration is a set of $2d$ lines on $S$
whose union is the intersection  $S\cap Q$ for some smooth quadric surface $Q$ \cite[Prop 4.0.2]{Brosowsky+Page+Ryan+Smith}.  By necessity, $d$ lines of the quadric configuration  lie in each ruling of $Q$. 
\end{definition}

\begin{lemma}\label{quadrics} Let $S\subseteq \mathbb P^3$ be a smooth surface of degree $d\geq 3$. 
If $S$ contains $d^2(d^2-3d+3)$ lines, then 
every set of three skew lines on $S$ determines a quadric configuration on $S$. 
\end{lemma}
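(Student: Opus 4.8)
The plan is to realize the desired quadric configuration as $Q\cap S$, where $Q$ is the unique quadric through the three given skew lines, and then to pin down $Q\cap S$ using our control over lines meeting a fixed line.

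First I would set up the quadric. Let $L_1,L_2,L_3$ be pairwise skew lines on $S$. It is classical that three pairwise skew lines in $\mathbb P^3$ lie on a unique quadric surface $Q$, and that $Q$ is smooth: a quadric cone has all its lines through its vertex, and a union of two planes cannot contain three pairwise skew lines because among any three of its lines two lie in a common plane (hence meet). So $Q\cong\mathbb P^1\times\mathbb P^1$, with $L_1,L_2,L_3$ lying in a single ruling, say the first ruling $R_1$. Since $Q\cap S$ is cut out on $Q$ by a form of degree $d$ and the hyperplane class restricts to a bidegree $(1,1)$ divisor on $Q$, the curve $Q\cap S$ is an effective divisor of bidegree $(d,d)$.

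Next I would identify the lines of $S$ meeting all of $L_1,L_2,L_3$. If $M\subseteq S$ is such a line, then it meets the three $L_i$ in three \emph{distinct} points (two of the $L_i$ being disjoint, $M$ cannot meet two of them at the same point); a line meeting three pairwise skew lines in distinct points lies on the quadric they span, so $M\subseteq Q$, and $M\notin R_1$ since lines of $R_1$ are pairwise skew while $M$ meets $L_1$ — hence $M$ lies in the second ruling $R_2$. Conversely, every line of $R_2$ is a transversal to $R_1$, hence meets $L_1,L_2,L_3$. Distinct lines of $R_2$ contained in $S$ are distinct bidegree $(0,1)$ components of the bidegree $(d,d)$ divisor $Q\cap S$, so there are at most $d$ of them; that is, at most $d$ lines of $S$ meet all of $L_1,L_2,L_3$, and symmetrically at most $d$ lines of $R_1$ lie on $S$. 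The crux is to upgrade this to an equality, and for that I would invoke the generalized quadrangle $(P,B,I)$ of \cref{gq}, which has parameters $(s,t)=(d-1,(d-1)^2)$ with $t=s^2$: the triple $\{L_1,L_2,L_3\}$ is a triad of points (two lines of $S$ are collinear in this incidence structure precisely when they are coplanar, by \cref{planes around a line}, so pairwise skew lines are pairwise non-collinear), and a center of the triad is exactly a line of $S$ meeting all three. Since in a generalized quadrangle of order $(s,s^2)$ every triad has a constant number $s+1$ of centers \cite{PayneThasBook}, exactly $d$ lines of $S$ meet all of $L_1,L_2,L_3$. I expect this step — the lower bound $\geq d$ in particular — to be the main obstacle: the quadric argument yields ``$\leq d$'' with no hypothesis on the line count, whereas producing $d$ lines of $S$ in the second ruling genuinely uses that $S$ carries the maximal number of lines, funneled through the combinatorics of the generalized quadrangle (one could alternatively try to extract the lower bound from a direct count using \cref{maxbetweenskewlines}).

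Finally I would assemble the configuration. Let $M_1,\dots,M_d$ be the $d$ lines of $S$ meeting all of $L_1,L_2,L_3$; they are precisely the lines of $R_2$ on $S$, hence pairwise skew. Applying the same count to the triad $\{M_1,M_2,M_3\}$, whose spanning quadric is again $Q$ with the $M_i$ now in ruling $R_2$, shows that exactly $d$ lines of $S$ meet all three $M_i$ and that these are precisely the lines of $R_1$ on $S$; as $L_1,L_2,L_3$ are among them, relabel the full list as $L_1,\dots,L_d$. Then $Q\cap S\geq L_1+\dots+L_d+M_1+\dots+M_d$ is a sum of $2d$ distinct components whose total bidegree $(d,d)$ already equals that of the effective divisor $Q\cap S$, so $Q\cap S=L_1+\dots+L_d+M_1+\dots+M_d$ with each line occurring once; thus $S\cap Q$ is the reduced union of these $2d$ lines. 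Its partition into $\{L_i\}$ and $\{M_j\}$ gives two sets of $d$ pairwise skew lines, one from each ruling, with every $L_i$ meeting every $M_j$ — a quadric configuration on $S$ containing $L_1,L_2,L_3$, as required.
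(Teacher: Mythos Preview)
Your argument is correct, but your route to the key lower bound ``exactly $d$ transversals'' differs from the paper's. Both proofs share the setup (the unique smooth quadric $Q$ through the three skew lines, the bidegree $(d,d)$ curve $Q\cap S$, and the upper bound of $d$ transversals in each ruling), and both finish by applying the count symmetrically to a triple of $M_i$'s to fill out the other ruling. Where you invoke the generalized quadrangle of \cref{gq} together with the standard fact (indeed \cite[1.2.4]{PayneThasBook}) that in a GQ of order $(s,s^2)$ every triad has exactly $s+1$ centers, the paper instead carries out precisely the ``direct count using \cref{maxbetweenskewlines}'' that you flag as an alternative: via \cref{planes around a line} it locates a plane $H$ through $L_3$ whose $d$ lines $N_1,\dots,N_d$ are hit by $L_1$ and $L_2$ at \emph{different} $N_j$; then the $(d-1)^2+1$ lines meeting both $L_1$ and $L_2$ (from \cref{maxbetweenskewlines}) each meet a unique $N_j$, with exactly one meeting $N_1$ and one meeting $N_2$, leaving $d(d-2)$ lines to be distributed among the $d-2$ skew triples $\{L_1,L_2,N_i\}$ for $3\le i\le d$, each capped at $d$ by the quadric argument---so pigeonhole forces exactly $d$ for each, in particular for the triple with $N_3=L_3$. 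Your approach is shorter and conceptually clean, leaning on GQ machinery already in place by \cref{gq}, at the cost of an external citation; the paper's argument is self-contained and purely geometric, and keeps the subsequent verification of $3$-regularity in \cref{has num star chord1} from looking like a second appeal to the same circle of combinatorial facts.
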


\begin{proof}
Any triple of skew lines in $\mathbb P^3$ determines a smooth quadric surface $Q$ \cite[2.12]{Harris}, with the three lines lying in one of the rulings of $Q$. Note that any line in $\mathbb P^3$ that intersects all three lines 
must lie on  $Q$, as its intersection multiplicity with the quadric $Q$ is greater than $2$. 
Since the intersection of $Q$ with our degree $d$ surface $S$ determines a curve of bi-degree $(d, d)$ on $Q$,   there are at most $d$ lines of $S$ (all in the other ruling of $Q$) which intersect the given three lines. That is,
 at most $d$ lines on $S$ can intersect every line in a set of three skew  lines in $\mathbb P^3$.

Fix a triple of skew lines  $\{L_1, L_2, L_3\}$ on $S$, and let $Q$ be the quadric surface they determine.
To prove \cref{quadrics}, we must show that the 
 bidegree $(d,d)$ curve $Q\cap S$  is a union of lines.

By \cref{max valence}, $L_3$ lies in $d^2-2d+2$ planes $H_i$, each of which contains $d$ lines of $S$. None of these $H_i$ contain $L_1$ or $L_2$ as both are skew to $L_3$.
Thus, $L_1$ and $L_2$ each intersect  $H_i$ in a single point, which necessarily lies on exactly one line in $H_i\cap S$ by \cref{threelines}.
If $L_1$ and $L_2$ intersect the same line in every $H_i \cap S$, then the three skew lines $\{L_1, L_2, L_3\}$ would intersect  $d^2-2d+2$ lines in common, which is impossible since at most $d$ lines on $S$ can intersect all three. 
Thus, there is some plane $H$ such that $L_3\subseteq H\cap S$ such that $L_1$ and $L_2$ intersect {\it different} lines in $H\cap S$.
Let  $N_1,  \dots, N_{d}$ be the lines of $H\cap S$ (where $N_3$ also denotes $L_3$ for convenience), and assume 
 without loss of generality that $L_1$ intersects $N_{1}$ and $L_2$ intersects $N_{2}$.  In particular, 
 $L_1$ and $L_2$ are skew to $N_i$ for $3\leq i \leq d$, and each set  $\{L_1, L_2,  N_i\}$ is a triple  of skew lines  on $S$.

By \cref{maxbetweenskewlines}, there are exactly $d^2-2d+2$ lines intersecting both $L_1$ and $L_2$; label them $M_i$.
No $M_i $ can lie in $ H$, since no line in $H\cap S$ intersects both $L_1$ and $L_2$. Thus, each $M_i$ intersects {\it exactly one} of the lines $N_j$ on $H$ for $1\leq j \leq d$ by \cref{threelines}.

We claim that {\it exactly one $M_i$ intersects $N_1$ and exactly one  $M_i$ intersects $N_2$.}
Indeed, consider the plane spanned by $L_1$ and $N_1$ (respectively $L_2$ and $N_2$). If $M_j$ and $M_k$ both intersect $N_1$ (respectively $N_2$), then both are in this plane (\cref{threelines}), so meet. In this case, $L_1$ and $ L_2$ 
are in this plane as well (again by  \cref{threelines}), contradicting the fact that $L_1$ and $L_2$ are skew.

Thus, there are  exactly  $d^2-2d+2-2=d(d-2)$ lines  intersecting $L_1$, $L_2$, and one of the $N_i$ with $3\leq i \leq d$.
Because $\{L_1, L_2, N_i\}$ is a triple of skew lines (for $3\leq i\leq d$), at most $d$ lines on $S$  can intersect all three.
So by the pigeonhole principle, for each $i=3, 4, \dots, d$,  there must be exactly $d$ lines intersecting $L_1$, $L_2$ and $N_i$. 
 \ifIdiotsGuide
\textcolor{teal}
{By the previous paragraph there are $d(d-2)$ lines that intersect $L_1$, $L_2$, and exactly one of $N_i$ for $3\leq i\leq d$. In particular, there are $d-2$ sets $\{L_1,L_2,N_i\}$ and those $d(d-2)$ lines each intersect all three lines in exactly one of those sets. Thinking of those $d-2$ sets as boxes and the $d(d-2)$ lines as pigeons, since each of the $d-2$ boxes can fit at most $d$ of the $d(d-2)$ pigeons, every box has exactly $d$ pigeons.
}
\fi
In particular, $L_1$, $L_2$, and $L_3$ intersect $d$ distinct lines of $S$ in common, say $M_1$, $\dots$, $M_d$, which all lie in the same ruling of the smooth quadric surface $Q$ determined by $L_1$, $L_2$, and $L_3=N_3$. Thus, $\{M_1, \dots, M_d\}$ is a set of $d$ skew lines in $S\cap Q$. 

Now, apply the same argument to the skew set  $\{M_1, M_2, M_3\}$ on $S$.  The same quadric $Q$ is determined, together with a set of $d$ skew lines  $\{L_1,  \dots, L_d\}$ lying in $S\cap Q$ in the other ruling. So   
$Q\cap S$ is a union the $2d$ lines $\{M_1, \dots, M_d, L_1, \dots, L_d\}$. \cref{quadrics} is proved.
\end{proof}

\begin{theorem}\label{has num star chord1} Let $S\subseteq \mathbb P^3$ be a smooth surface of degree $d\geq 3$ containing $d^2(d^2-3d+3)$ lines.
Then $d = p^e+1$ for some prime $p$ and positive integer $e$, and the generalized quadrangle of lines on $S$ (defined in \cref{gq}) is isomorphic to the generalized quadrangle of lines of an extremal surface (\cref{Fermat gq}).
\end{theorem}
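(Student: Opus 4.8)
The plan is to verify the hypotheses of Thas's \cref{thasCorollary} for the generalized quadrangle $(P,B,I)$ of lines and $d$-line planes on $S$ established in \cref{gq}. By \cref{gq}, $(P,B,I)$ is a generalized quadrangle with parameters $(d-1,(d-1)^2)$, so setting $q = d-1 > 1$, it suffices to show that every triad of $(P,B,I)$ is $3$-regular. Under the dictionary recorded just before \cref{quadrics}, a triad is a set of three pairwise skew lines on $S$, and $3$-regularity of $\{L_1,L_2,L_3\}$ means: the set of lines on $S$ meeting all three can be ``completed'' so that the set of common transversals is exactly the set of lines meeting $d$ prescribed lines. This is exactly what \cref{quadrics} gives us geometrically.

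First I would fix an arbitrary triad $\{L_1,L_2,L_3\}$, i.e.\ three skew lines on $S$, and invoke \cref{quadrics} to produce the quadric configuration: a smooth quadric $Q$ with $Q\cap S = \{M_1,\dots,M_d\}\cup\{L_1,\dots,L_d\}$, where $L_1,L_2,L_3$ are among the $L_j$'s (one ruling) and the $M_i$'s form the other ruling. The lines on $S$ meeting all three of $L_1,L_2,L_3$ are precisely the lines of $\mathbb P^3$ meeting all three, which all lie on $Q$ and hence are among $M_1,\dots,M_d$ (any line meeting three lines of a ruling has intersection multiplicity $>2$ with $Q$, so lies on $Q$, in the opposite ruling); conversely each $M_i$ meets all of $L_1,\dots,L_d$, in particular $L_1,L_2,L_3$. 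So the common transversals of the triad are exactly $\{M_1,\dots,M_d\}$. Now I would take $\{L_1,\dots,L_d\}$ (all $d$ lines of that ruling of $Q$ lying on $S$) as the extension of the triad to a set of $q+1 = d$ elements of $P$. Any $y\in P$ incident (in the GQ sense) with $L_1,L_2,L_3$ is a line on $S$ meeting all three, hence equals some $M_i$, hence meets every $L_j$ for $1\le j\le d$; thus $y$ is incident with $L_4,\dots,L_d$ as well. This is precisely the $3$-regularity condition of \cref{3-reg}(ii). Since the triad was arbitrary, every triad is $3$-regular.

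Applying \cref{thasCorollary} then yields that $q = d-1$ is a prime power, say $d - 1 = p^e$ for a prime $p$ and $e\ge 1$ (so $d = p^e+1$ and, since $d \geq 3$ forces $p^e \geq 2$, indeed $e\ge 1$), and that $(P,B,I)$ is isomorphic to the dual of the Hermitian generalized quadrangle. By \cref{dualgq}, the dual of the Hermitian generalized quadrangle (\cref{Hermitian gq}) is isomorphic to the generalized quadrangle of lines and star planes on an extremal surface of degree $p^e+1$ (\cref{Fermat gq}). Composing these isomorphisms gives the desired identification of the generalized quadrangle of lines on $S$ with that of \cref{Fermat gq}, completing the proof.

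The main obstacle is entirely front-loaded into \cref{quadrics}: once one knows that every three skew lines on $S$ span a full quadric configuration, the verification of $3$-regularity is essentially a restatement, and the rest is a citation of Thas plus the duality bookkeeping of \cref{dualgq}. The one point demanding a little care is matching conventions: checking that ``incident with $L_1,L_2,L_3$'' in the GQ sense (a common point) coincides with ``is a transversal line on $S$'', and that the extension set of size $q+1=d$ in \cref{3-reg}(ii) can indeed be taken inside one ruling of $Q$ — both of which follow from the quadric configuration description and \cref{threelines}. I would also note explicitly that $d > 3$ is not needed here ($d\ge 3$ suffices, with the cubic case $q=2$ genuinely occurring, cf.\ \cref{cubic}), so the theorem is stated for $d\ge 3$.
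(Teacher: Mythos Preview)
Your proof is correct and follows essentially the same approach as the paper: verify that every triad in the generalized quadrangle of \cref{gq} is $3$-regular by invoking the quadric configuration of \cref{quadrics}, then apply \cref{thasCorollary} and \cref{dualgq}. The paper's proof is slightly terser in that it does not spell out that the common transversals are exactly $\{M_1,\dots,M_d\}$, but the logic is identical.
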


\begin{proof} Let $(P, B, I)$ be the generalized quadrangle where $P$ is the set of lines on $S$ and $B$ is the set of planes containing $d$ lines of $S$. The parameters of this  generalized
quadrangle are $(d-1, (d-1)^2)$.
An arbitrary  triad in this generalized quadrangle is a triple  $\{L_1, L_2, L_3\}$ of skew lines on $S$. By  \cref{quadrics}, any such triad  determines a quadric configuration 
$$
S\cap Q =\{L_1, L_2, L_3, \dots, L_d\} \, \bigcup \, \{M_1, M_2, \dots, M_d\}  
$$ on $S$ where the $L_i$'s are in one of the rulings and the $M_i$'s the other.
So we can extend the triad $\{L_1, L_2, L_3\}$ to a set 
$\{L_1, L_2, L_3, \dots L_d\} $ of $d$ skew lines on $S$.
These have the property that for any line $M$ on $S$ intersecting  $L_1, L_2$ and $L_3$,  $M$ must also intersect $\{L_4, \dots L_d\}.$  Since two lines on $S$ intersect if and only if they are coplanar, 
 this exactly says that the  triad $\{L_1, L_2, L_3\}$ of the generalized quadrangle  $(P, B, I)$ is $3$-regular in the sense of \cref{3-reg}.  Thus, the hypothesis of \cref{thasCorollary}  is satisfied, and we conclude that  $d-1$ is  a prime power $p^e$ and that the incidence structure $(P, B, I)$ is isomorphic to the dual of the Hermitian generalized quadrangle using \cref{thasCorollary}. By \cref{dualgq}, $(P, B, I)$ is isomorphic to the  generalized quadrangle of lines and star planes on  an extremal surface, as described in \cref{Fermat gq}. 
\end{proof}

\begin{proof}[Proof of \cref{has num star chord2}]  \cref{has num star chord1} shows that $d-1$ is a prime power. 
Since two lines on $S$ intersect if and only if they lie in a plane together (which then must have $d$ lines of $S$ by \cref{planes around a line}), \cref{has num star chord1} shows that  the 
intersection matrix of the lines  on any surface with the maximal number of lines is identical to the intersection matrix of the lines on an extremal surface (for appropriate ordering of the lines).
The existence of a numerical star chord pair configuration on a surface depends only on the intersection matrix of its lines. So
since extremal surfaces have (numerical) star chord pair configurations \cite[Thm 3.1.4]{Brosowsky+Page+Ryan+Smith}, so does $S$. 
\end{proof}

\section{Surfaces with Maximal Lines are Extremal}\label{last step}

In this section, we complete the proof of \cref{main} by proving the following:

\begin{theorem}\label{maintheoremsec3} Suppose $S \subseteq \mathbb P^3$ is a smooth surface of degree $d>3$ containing the maximal number of lines, $d^2(d^2-3d+3)$.  Then $S$ is extremal. That is, the characteristic of the ground field is $p>0$, $d=p^e
+1$ for some $e\in \mathbb N$, and $S$ is projectively equivalent\footnote{over the algebraic closure of the ground field} to the Fermat surface defined by $x^{p^e+1}+y^{p^e+1}+z^{p^e+1}+w^{p^e+1}.$
\end{theorem}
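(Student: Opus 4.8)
The plan is to use the numerical star chord pair configuration from \cref{has num star chord2}, together with the quadric configurations of \cref{quadrics}, to force $S$ into an explicit normal form, and then to read off its equation from a Bézout count on a net of lines of $S$.

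\emph{Normal form.} By \cref{has num star chord2}, $d-1=q$ is a prime power and $S$ carries $d^{2}$ lines $L_{ij}=H_i\cap K_j$ ($1\le i,j\le d$) partitioned by planes $H_1,\dots,H_d$ and, in a second way, by $K_1,\dots,K_d$, with all $H_i\cap H_j$ and $K_i\cap K_j$ chords (\cref{skew prop}). The first task, and the one I expect to be hardest, is to promote this numerical configuration to a genuine \emph{star} configuration: to show each of the two families of $d$ coplanar lines is concurrent. This is exactly where $d>3$ must enter, since cubics are genuine exceptions (\cref{cubic}); I would prove it by analysing, via Bézout, which members of a net of lines through the configuration lie on $S$, using the quadric configurations of \cref{quadrics} and the fact (from the ``moreover'' clause of \cref{planes around a line}) that every plane through a line of $S$ which contains a second line of $S$ contains exactly $d$ lines of $S$. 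Once the configuration is a star configuration, the planes $H_1,\dots,H_d$ share a common line $M$, the $K_j$ share a common line $L$, and $L\cap M=\varnothing$; choosing coordinates with $M=\mathbb V(x,y)$, $L=\mathbb V(z,w)$, $H_i=\mathbb V(x-\mu_i y)$, $K_j=\mathbb V(z-\nu_j w)$, the restriction $f|_{H_i}$ is divisible by the degree-$d$ form $\prod_j(z-\nu_j w)$, hence is a scalar multiple of it, and Lagrange interpolation in $x$ over the $d$ values $x=\mu_i y$ forces
\[
 f \;=\; a\prod_{i=1}^{d}(x-\mu_i y)\;+\;b\prod_{j=1}^{d}(z-\nu_j w),\qquad a,b\in k^{\times}.
\]
After rescaling, $f=H(x,y)+K(z,w)$ with $H,K$ binary forms of degree $d=q+1$ having distinct roots.

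\emph{The line count and the endgame.} Since $S$ has $d^{2}(d^{2}-3d+3)=(q+1)(q^{3}+1)$ lines, far more than the $d^{2}$ configuration lines, most lie in no coordinate plane. After ruling out lines of $S$ in a plane $\mathbb V(x-\mu y)$ with $\mu$ not a root of $H$ (and symmetrically for $z,w$) -- so that exactly the $d^{2}$ configuration lines lie in coordinate planes -- every other line of $S$ projects isomorphically onto both the $[x:y]$- and the $[z:w]$-line, hence has the form $\{(z,w)=A(x,y)\}$ for a unique $A\in\GL_2(k)$, and lies on $S$ precisely when $K\circ A=-H$ as binary forms. Counting,
\[
 \#\{A\in\GL_2(k):K\circ A=-H\}\;=\;(q+1)(q^{3}+1)-(q+1)^{2}\;=\;q(q+1)^{2}(q-1),
\]
and the left-hand set is a coset of the finite $\GL_2$-stabilizer of $K$. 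Everything thus reduces to a statement about binary forms: the image in $\PGL_2(\bar k)$ of the $\GL_2$-stabilizer of $H$ (equivalently of $K$) has order $q(q+1)(q-1)=|\PGL_2(\mathbb F_q)|$ and an orbit on $\mathbb P^{1}$, namely the root set, of size $q+1$; the classification of finite subgroups of $\PGL_2(\bar k)$ (Klein in characteristic $0$, Dickson in characteristic $\ell>0$) then shows that a group of this order with an orbit of size $q+1$ can exist only when $\operatorname{char}k=p>0$ and $q=p^{e}$, in which case the group is a conjugate of $\PGL_2(\mathbb F_q)$ and $H$, $K$ are each $\GL_2$-equivalent to $x^{q+1}+y^{q+1}$ (for $d>3$ the bound $60$ on the non-modular subgroups is already far too small, and the residual small cases are covered by the quartic bounds of \S\ref{quartics}). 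Applying separate coordinate changes in $(x,y)$ and in $(z,w)$ now puts $f$ into the form $x^{p^{e}+1}+y^{p^{e}+1}+z^{p^{e}+1}+w^{p^{e}+1}$ up to a nonzero scalar.

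\emph{Conclusion and obstacle.} With $f$ in this Fermat form and $\mathbb V(f)$ smooth, $S$ is extremal (\cref{extremaldef}); the classification of smooth extremal surfaces -- \cite[Thm 6.1]{LowerBoundsExtremal}, \cite[Prop 2.3]{Shi01}, or \cite{Beauville.90} -- identifies $S$, up to projective equivalence, with the Fermat surface \eqref{fermat}, and with \cref{has num star chord1} this gives (i)--(iii). The principal obstacle is the normal-form step: converting the purely combinatorial configuration into the geometric statement that the configuration planes are concurrent (equivalently, forcing the right coefficients of $f$ to vanish) is precisely where the delicate iterated Bézout analysis of nets of lines lives, and where the cubic case genuinely breaks down; the binary-form input in the second step is the other place requiring care, especially the extraction of ``$q$ is a power of the characteristic'' uniformly in $d$.
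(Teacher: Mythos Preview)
Your outline diverges from the paper in both major steps, and the first divergence is where the real gap lies.

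\textbf{The normal-form step.} You want to first promote the numerical star chord configuration to a genuine one---showing the $d$ lines in each $H_i$ are concurrent and hence that the $H_i$ share a common line---before writing $f=H(x,y)+K(z,w)$. You acknowledge this is ``the principal obstacle'' and do not actually carry it out; the sentence ``I would prove it by analysing, via Bézout, which members of a net of lines through the configuration lie on $S$'' is precisely where the work is missing. The paper does \emph{not} attempt this. It uses only the numerical configuration (\cref{num star chord equation}) to write
\[
f \;=\; xw\prod_{i=1}^{d-2}\ell_i \;+\; yz\prod_{i=1}^{d-2}m_i
\]
with $\ell_i,m_j$ \emph{arbitrary} linear forms, and then runs Bézout on the two-parameter family of lines $x=Az,\ y=Bw$ meeting the skew pair $\mathbb V(x,z),\,\mathbb V(y,w)$---lines having nothing to do with the configuration. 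By \cref{maxbetweenskewlines} exactly $(d-1)^{2}$ such lines lie on $S$, so the coefficients $F_{\ell}(A,B)$ of $z^{\ell}w^{d-\ell}$ in $\bar f$ have $(d-1)^{2}$ common zeros; since the degree-$(d-1)$ polynomials $F_{1},F_{d-1}$ already cut out that many, each intermediate $F_{\ell}$ is forced to vanish identically. Repeating with $y\leftrightarrow z$ kills enough monomials that, after a short factorization argument, $f=x^{d-1}w+xw^{d-1}+y^{d-1}z+yz^{d-1}$ up to scaling. The concurrency you sought is then a \emph{consequence} (the $\ell_i$ turn out to be linear in $x,w$ alone), not an input.

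\textbf{The characteristic step.} Your endgame---counting $\{A\in\GL_2:K\circ A=-H\}$, identifying it as a coset of $\operatorname{Stab}_{\GL_2}(K)$ of size $q(q-1)(q+1)^{2}$, and invoking Klein--Dickson on the resulting subgroup of $\PGL_2$ of order $q(q^{2}-1)$ with a $(q+1)$-orbit---is a genuinely different and attractive route, and the counts are correct. But it is heavier than necessary and not quite closed: for $q=3,4$ the orders $24,60$ coincide with $|S_4|,|A_5|$, and you dispose of these by appeal to the quartic literature rather than by the orbit analysis you promise. The paper instead finishes in one stroke from the explicit form of $f$: restricting to a line $x=a_2y+a_4w$, $z=b_4w$ through $\mathbb V(z,w)$, the coefficient of $y^{d-2}w^{2}$ is $(d-1)a_2^{d-2}a_4$; since there are $d(d-1)(d-2)$ such lines on $S$ (too many for $a_4=0$ on all of them), $d-1$ must vanish in $k$. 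This is uniform in $d$ and needs no group classification.
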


Our strategy is to repeatedly use the numerical star chord configuration on $S$ guaranteed by \cref{has num star chord2},  together with the configurations of lines on $S$ intersecting a pair of skew lines  guaranteed by \cref{maxbetweenskewlines}, to get successively stronger restrictions on the monomials that can appear in a defining equation $f$ for $S$ in an appropriate choice of coordinates. Interestingly, although \cref{has num star chord2} guarantees that the degree $d$ of $f$ must satisfy $d=p^{e}+1$ for {\it some} prime $p$, it does {\it not} guarantee that $p$ is the characteristic of the ground field. This will  require proof.

\begin{lemma}
\label{num star chord equation} Let $S\subseteq \mathbb P^3$ be a smooth surface of degree $d$ containing a 
 numerical star chord configuration.  Then without loss of generality the defining equation of $S$ can be taken to be 
\begin{equation}\label{f form}
f = xw \prod_{i=1}^{d-2} \ell_i + yz \prod_{i=1}^{d-2}m_i
\end{equation}
 where $\ell_i$ and $m_i$ are distinct (up to scalar multiple)  linear 
polynomials in $x,y,z,w$.
\end{lemma}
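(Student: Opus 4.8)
The plan is to extract coordinates directly from the combinatorial data of the numerical star chord configuration. By \cref{num star chord config}, $S$ carries $d^2$ lines $L_{i,j} = H_i \cap K_j$ ($1\le i,j\le d$) partitioned in two ways by the planes $\{H_1,\dots,H_d\}$ and $\{K_1,\dots,K_d\}$. By \cref{skew prop}(a), the lines $H_1\cap H_2$ and $K_1\cap K_2$ do \emph{not} lie on $S$; and by \cref{threelines} plus \cref{skew prop}(b) the four lines $L_{1,1}, L_{1,2}, L_{2,1}, L_{2,2}$ form a ``quadrilateral'' whose four vertices span $\mathbb P^3$ (if they were coplanar, all four would lie in one plane $H_i$, contradicting that the $H_i$ partition the lines). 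So I would choose projective coordinates $x,y,z,w$ so that $H_1 = \mathbb V(x)$, $H_2 = \mathbb V(y)$, $K_1 = \mathbb V(z)$, $K_2 = \mathbb V(w)$; then $L_{1,1} = \mathbb V(x,z)$, $L_{1,2}=\mathbb V(x,w)$, $L_{2,1}=\mathbb V(y,z)$, $L_{2,2}=\mathbb V(y,w)$ all lie on $S$.

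Next I would pin down the form of $f$. Since $L_{1,1},L_{1,2}\subseteq H_1\cap S$ forces $x \mid f$? No---rather, $L_{1,1}=\mathbb V(x,z)\subseteq S$ means $f\in(x,z)$, and running through all four of $L_{1,1},L_{1,2},L_{2,1},L_{2,2}$ gives $f\in (x,z)\cap(x,w)\cap(y,z)\cap(y,w) = (xy, xz, xw, yz, yw, zw)\cap\dots$; a direct monomial computation shows this intersection of ideals is $(xw, yz) + (xy,zw)\cdot(\text{stuff})$---more cleanly, $f\in(x,y)$ or $f$ has an $xy$-type term, but using all four lines one gets that modulo $(xy, zw)$ the polynomial $f$ lies in $(xw)\cap(yz)$ componentwise. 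The cleaner route: since $H_1\cap S$ contains the lines $L_{1,j}$ for \emph{all} $j$, and there are $d$ of them, $H_1\cap S$ is exactly a union of $d$ lines; in coordinates $f|_{x=0} = f(0,y,z,w)$ must be a product of $d$ linear forms in $y,z,w$, two of which are $z$ and $w$. So $f(0,y,z,w) = zw\cdot g(y,z,w)$ for some degree $d-2$ form $g$. Symmetrically $f(x,0,z,w)=zw\cdot g'$, $f(x,y,0,w)=xw\cdot h$, $f(x,y,z,0)=xy\cdot h'$. Writing $f = xw\,A + yz\,B + (\text{terms divisible by } xy \text{ or } zw)$ and imposing these four restrictions forces the $xy$- and $zw$-divisible part to vanish, yielding $f = xw\,A + yz\,B$ with $A,B$ of degree $d-2$; then $f(0,y,z,w) = yz\,B(0,y,z,w)$ must equal $zw\cdot g$, forcing $B(0,y,z,w)$ divisible by $w$... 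I would organize this so that the two partitions together force $A = \prod \ell_i$ and $B = \prod m_i$ into products of linear forms, namely: $A$ and $B$ must each split because $H_i\cap S$ ($i\ge 3$) and $K_j\cap S$ ($j\ge 3$) are unions of $d$ lines, and tracking which $L_{i,j}$ lie where shows the linear factors of $A$ cut out the $H_i$, $i\ge 3$ (restricted appropriately) and those of $B$ cut out the $K_j$, $j\ge 3$.

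Finally I would verify the distinctness claim: no $\ell_i$ is a scalar multiple of any $m_j$. If $\ell_1 = m_1$ up to scalar, then the plane $\mathbb V(\ell_1)$ would meet $S$ in a section containing lines from both the $H$-partition and the $K$-partition in a way incompatible with \cref{skew prop}---concretely, $\mathbb V(\ell_1)$ would have to be one of the $H_i$ and one of the $K_j$ simultaneously, making $H_i = K_j$, which contradicts \cref{skew prop}(a) (the line $H_i\cap K_j=L_{i,j}$ would be ill-defined, or rather every $H_i\cap K_j$ collapses). One also needs $\ell_i\ne\ell_{i'}$ for $i\ne i'$ and likewise for the $m$'s, which holds because the $H_i$ are distinct planes. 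The main obstacle I anticipate is the bookkeeping in the middle step---showing the $xy$- and $zw$-divisible terms of $f$ genuinely vanish and that $A,B$ fully split into linear forms matching the remaining $H_i$ and $K_j$---since this requires carefully using \emph{all} $d^2$ lines of the configuration and the fact that each of the $2d$ planes $H_i, K_j$ meets $S$ in exactly $d$ lines, not merely the four coordinate lines; the smoothness of $S$ (via \cref{threelines}) is what prevents degenerate coincidences among the linear factors.
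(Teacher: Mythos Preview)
Your plan heads in a workable direction but is considerably more tangled than necessary, and it contains a concrete labeling slip. With your choice $H_1=\mathbb V(x)$, $H_2=\mathbb V(y)$, $K_1=\mathbb V(z)$, $K_2=\mathbb V(w)$, the intersection $(x,z)\cap(x,w)\cap(y,z)\cap(y,w)$ is the monomial ideal $(xy,zw)$, so the natural decomposition is $f=xy\,P+zw\,Q$, \emph{not} $f=xw\,A+yz\,B+(\text{extra})$; your ``forcing the $xy$- and $zw$-divisible part to vanish'' step is therefore aiming at the wrong target. You would then still need to show that $P$ and $Q$ each split into $d-2$ distinct linear factors matching the remaining $H_i$ and $K_j$, and the non-uniqueness of the decomposition $xy\,P+zw\,Q$ makes this bookkeeping genuinely awkward, exactly as you anticipated.

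The paper bypasses all of this with one clean observation: for \emph{every} $i$, the plane section $H_i\cap S$ equals $\mathbb V(\ell_i,\prod_{j=1}^d m_j)$, so $f\equiv \lambda_i\prod_j m_j\pmod{\ell_i}$ for some nonzero scalar $\lambda_i$. One checks (using \cref{skew prop}(a) to see that $\prod_j m_j\notin(\ell_i,\ell_{i'})$) that all the $\lambda_i$ coincide, so after a single rescaling $f-\prod_j m_j$ lies in $(\ell_i)$ for every $i$; since the $\ell_i$ are pairwise coprime and $\deg f=d$, this forces $f=c\prod_i\ell_i+\prod_j m_j$. Only \emph{then} does one pick coordinates, choosing $\ell_1,\ell_2,m_1,m_2$ to be $w,x,y,z$ (possible because $L_{1,1}$ and $L_{2,2}$ are skew by \cref{skew prop}(b)). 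This global argument uses all $d$ planes $H_i$ at once rather than bootstrapping from four coordinate lines, and it delivers the factored form directly with no case analysis.
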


\begin{proof}
Let  $\{H_i \,\,| \,\, i=1, \dots, d\}$ and $\{K_j \,\,| \,\, j=1, \dots, d \}$ be the two collections of $d$ planes each containing $d$ of the  $d^2$ lines forming a numerical star chord pair configuration on $S$. Suppose that $H_i $ and $K_j$ are defined by linear forms $\ell_i$ and $m_j$, respectively. The $d^2$ distinct lines of the configuration are therefore $\mathbb{V}(\ell_i,m_j) = H_i\cap K_j$.

Let $f$ be the degree $d$ defining equation of $S$.
Since $H_i\cap S$ is a union of the $d$ lines $\mathbb{V}(\ell_i,m_1),  \mathbb{V}(\ell_i,m_2), \dots, \mathbb{V}(\ell_i,m_d),$ we have
\[
S\cap H_i = \mathbb{V}(\ell_i, f)  =   \mathbb{V}(\ell_i, \prod_{j=1}^d m_j).
\]
So  $f$ is in the ideal $ (\ell_i, \prod_{j=1}^d m_j),$
as this ideal   is  radical in $k[x, y, z, w]$.
Thus, for each $i$,  we can write 
\[
f = \ell_i g_i + \lambda_i \prod_{j=1}^d m_j
\]
for some form $g_i$ of degree $d-1$ and some non-zero constant $\lambda_i$. Replacing each linear form $m_j$ by a suitable scalar multiple, we can assume that for each $i$, there is a degree $d-1$-form $g_i$ such that 
\[
f - \prod_{j=1}^d m_j =  \ell_i g_i.
\]
But now the polynomial $f-\prod_{j=1}^d m_j$ is a degree $d$ form in $k[x, y, z, w]$ divisible by {\it each} of the linear  forms $\ell_1, \ell_2, \dots, \ell_d$, so  it must be 
$\prod_{i=1}^d \ell_i $ (up to non-zero scalar multiple) by  the UFD property of 
the polynomial ring.  Without loss of generality, $f= \prod_{i=1}^d \ell_i+\prod_{j=1}^d m_j$.

Finally, since the lines $\mathbb V(\ell_1, m_1)$ and $\mathbb V(\ell_2, m_2)$ are skew in $\mathbb P^3$ (\cref{skew prop}),
the ideal generated by $\ell_1, \ell_2, m_1,  m_2$ must be 
$(w, x, y, z)$, so we can choose coordinates to make $\ell_1,  \ell_2, m_1, m_2$ equal to $w, x, y, z$, respectively. This ensures $f$ has the form \eqref{f form}.
\end{proof}

\begin{notation}\label{standardf}
Throughout the rest of this section, we will assume $S = \mathbb{V}(f)\subseteq \mathbb P^3$ is a smooth surface of degree 
$d >3$ which contains a numerical star chord configuration.  By \cref{num star chord equation}, we write $f$ in the following way
	\begin{equation}\label{num star chord generic eq}
		f = xw \prod_{i=1}^{d-2} (a_i x+b_iy+c_iz+d_i w)+ yz \prod_{i=1}^{d-2} (e_ix +f_iy+g_i z+h_iw).
	\end{equation}
\begin{lemma}\label{zerofgad} 
Suppose that a smooth surface in $\mathbb P^3$ has defining equation of the form \eqref{num star chord generic eq}. 
Then none of the coefficients  $a_i$,  $d_i$,  $f_i$ or  $g_i$ is zero for any $i=1, \dots, d$.
\end{lemma}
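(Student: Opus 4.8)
The plan is to pin down each of the four families of coefficients by exhibiting a point of $S$ at which smoothness fails unless every coefficient in that family is nonzero; the four points to use are the coordinate points $[1:0:0:0]$, $[0:1:0:0]$, $[0:0:1:0]$, $[0:0:0:1]$. First I would check that each lies on $S$: in \eqref{num star chord generic eq} every monomial of the first summand is divisible by $xw$ and every monomial of the second by $yz$, so $f$ contains no pure power $x^d$, $y^d$, $z^d$, or $w^d$, and hence vanishes at each coordinate point. (Conceptually, these are simply the pairwise intersection points of the four structural lines $\mathbb V(x,y)$, $\mathbb V(x,z)$, $\mathbb V(w,y)$, $\mathbb V(w,z)$ of the numerical star chord configuration, which lie on $S$ by construction.)

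The main step is to compute the gradient of $f$ at these four points; a short differentiation, using again that $xw$ and $yz$ divide the two summands, shows that at each coordinate point exactly one partial derivative is nonzero. Explicitly, at $[1:0:0:0]$ the gradient is $(0,0,0,\prod_{i=1}^{d-2}a_i)$; at $[0:0:0:1]$ it is $(\prod_{i=1}^{d-2}d_i,0,0,0)$; at $[0:1:0:0]$ it is $(0,0,\prod_{i=1}^{d-2}f_i,0)$; and at $[0:0:1:0]$ it is $(0,\prod_{i=1}^{d-2}g_i,0,0)$. Since $S$ is smooth, each of these gradients is nonzero, forcing $\prod a_i$, $\prod d_i$, $\prod f_i$, and $\prod g_i$ all to be nonzero, which is exactly the assertion that none of the $a_i$, $d_i$, $f_i$, $g_i$ vanishes. (As a sanity check, one reads off that the tangent planes at these four points are $\mathbb V(w)$, $\mathbb V(x)$, $\mathbb V(z)$, $\mathbb V(y)$ respectively, consistent with each of the four points lying on two of the structural lines.)

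I do not expect a genuine obstacle here — the content is a short direct calculation — but the one pitfall to avoid is trying to extract the lemma from the weaker fact that the plane sections $S\cap\mathbb V(y)$ and $S\cap\mathbb V(z)$ are reduced unions of $d$ lines. That information yields only pairwise non-degeneracy conditions such as $(a_i,c_i)\neq(0,0)$ and $(c_i,d_i)\neq(0,0)$, which are compatible with $a_i=d_i=0$; it is genuinely the smoothness of $S$ at the coordinate points, not merely of these plane sections, that pins down the individual coefficients.
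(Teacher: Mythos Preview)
Your proof is correct and takes a genuinely different route from the paper's. The paper argues geometrically via the line configuration: if, say, $a_i=0$, then the line $\mathbb V(\ell_i,z)$ of the configuration passes through $[1:0:0:0]$, as does $\mathbb V(w,y)$; but these are lines $L_{i',j'}$ and $L_{m,n}$ of the numerical star chord configuration with distinct first and second indices, and hence must be skew by \cref{skew prop}(b), a contradiction. Your approach bypasses this combinatorial structure entirely, invoking only the Jacobian criterion at the four coordinate points. This makes your argument more self-contained---it uses nothing beyond the explicit form \eqref{num star chord generic eq} and smoothness of $S$---whereas the paper's proof routes smoothness indirectly through \cref{threelines} inside the proof of \cref{skew prop}. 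Both arguments are short; yours is arguably the more elementary, and your closing observation about why reducedness of the plane sections alone would not suffice is accurate.
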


\begin{proof} By symmetry, it suffices to show that no $a_i$ is zero.
Suppose that $a_i=0$. Then for $\ell_i=b_iy+c_iz+d_i w$, 
the point $[1:0:0:0]$ lies on both lines
$\mathbb V(w, y)$ and $\mathbb V(\ell_i, z)$. In particular, this pair of lines is not skew, contrary to \cref{skew prop}(b).
\end{proof}
\end{notation}

\begin{proof}[Proof \cref{maintheoremsec3}]
By \cref{has num star chord2}, $S$ has a numerical star chord pair configuration.
By \cref{num star chord equation} and \cref{zerofgad},  we may assume that the defining equation of $S$ has the form
\[
f = xw \prod_{i=1}^{d-2} (a_ix+b_iy+c_iz+d_i w)+ yz \prod_{i=1}^{d-2} (e_ix +f_iy+g_i z+h_iw)
\]
where no $a_i, d_i, f_i$ or $g_i$ is zero.

By \cref{maxbetweenskewlines}, there should be exactly $(d-1)^2+1$ lines on $S$ intersecting both of the  skew lines $\mathbb V(x,z)$ and $\mathbb V(y,w)$ of $S$. One of these lines is 
$\mathbb V(z,w)$. The others are all defined by
\begin{equation}\label{line form}
x=Az\,\,\,\,\,\, \text{ and }  \,\,\,\,\,\, y = Bw 
\end{equation}
for some scalars $A, B\in k$.
Restricting the polynomial $f$ to  an arbitrary line $L$ given by equations of the form \eqref{line form}, we get a polynomial $\overline f$ in two variables which is zero if and only if $L$ lies on $S$. Algebraically, we can write $\bar f$ as the image of $f$ under the map 
\[
k[x, y, z, w] \rightarrow k[z, w] \,\,\,\,\,\, \,\,\,\,\,\ x\mapsto Az,\,\,\,y \mapsto Bw
\]
whose kernel is the ideal of $L$.
Explicitly, in these coordinates
\begin{equation}\label{f-bar}
\bar{f} = Azw \prod_{i=1}^{d-2} ((Aa_i+c_i)z+(b_iB+d_i)w)+ Bzw \prod_{i=1}^{d-2} ((e_iA+g_i)z +(Bf_i+h_i)w).
\end{equation}
Since there are exactly $(d-1)^2$ lines of type \eqref{line form} on $S$, there  are exactly $(d-1)^2$ choices for $(A,B)$ where $\bar f\in k[z, w]$ is the zero polynomial.  

To analyze this, let  $F_\ell \in k[A, B]$ be the coefficient of $z^\ell w^{d-\ell}$ in $\bar{f}$  as expressed in \eqref{f-bar}.  The line $L$ given by the equations \eqref{line form} lies on $S$ if and only if 
\begin{equation}\label{F's}
F_1(A, B)=F_2(A, B)=\cdots = F_{d-1}(A, B)=0.
\end{equation}
 In particular, there are exactly $(d-1)^2$ solutions to the equations \eqref{F's}. Looking at just $F_1$ and $F_{d-1}$, we see that 
\[F_{1} = A \prod_{i=1}^{d-2} (b_iB+d_i)+ B \prod_{i=1}^{d-2} (f_iB + h_i) 
\,\,\,\, \text{ and } \,\,\,\,
F_{d-1} = A \prod_{i=1}^{d-2} (a_iA+c_i)+ B \prod_{i=1}^{d-2} (e_iA + g_i).\]
As neither $\prod a_i$ nor $\prod f_i$ is zero (\cref{zerofgad}), the polynomials  $F_{d-1}$ and $F_1$ are both degree $d-1$. Furthermore, $F_1$ and $F_{d-1}$ have no non-trivial common factor: 
 viewing $F_1$ as a degree one polynomial in the variable $A$ over the field $k(B)$, we see that any factor would be a polynomial in $B$ alone, and likewise any factor of $F_{d-1}$
 would be a polynomial in $A$ alone; hence any common factor is a scalar. Hence
Bezout's theorem tells us that $F_1$ and $F_{d-1}$ can have {\it at most}  $(d-1)^2$ common  zeros. On the hand,  imposing the further conditions that 
{\it all } the  $F_\ell$ in \eqref{F's} vanish, we have exactly 
 $(d-1)^2$ distinct zeros.  Thus, the degree $d-1$ polynomials $F_1$ and $F_{d-1}$ together must cut out the full set of all $(d-1)^2$ solutions to 
\eqref{F's}. In particular, the ideal $(F_1, F_{d-1})$ must be the radical ideal describing all $(d-1)^2$ solutions to 
\eqref{F's}, and every other $F_\ell$ must be in the ideal $(F_1, F_{d-1})$.

 \ifIdiotsGuide
\textcolor{teal}{{\sc Another way to see no common factors:}
A priori, $F_1$ and $F_{d-1}$ could share a factor, if they do not then the statement is immediate.
Presume the product of the common factors of $F_1$ and $F_{d-1}$ has degree $e$, then their intersection is a degree $e$ curve $C$ and a finite subscheme of length $(d-1-e)^2$. 
Let $C_1$, $\dots$, $C_m$ be the irreducible components of $C$ of degrees $e_1$, $\dots$, $e_\ell$, respectively.
Since there are finitely many solutions, there must be some $F_{\ell_i}$, which has degree at most $d-1$, that does not contain $C_i$.
Thus, there are at most $e_i\cdot (d-1)$ solutions on $C_i$.
Adding these solutions up, there is at most $(d-1)e$ solutions on $C$. 
Thus, there are at most $(d-1-e)^2+(d-1)e = (d-1)^2-e(d-1)+e^2$ solutions.
Since there must be $(d-1)^2$ solutions, $(d-1)^2-e(d-1)+e^2\geq (d-1)^2$ or in other words, $e^2\geq (d-1)e$.
Since $F_1$ and $F_{d-1}$ are not scalar multiples as they have distinct sets of monomials, $e<d-1$ so $e=0$.
Thus, the intersection of $F_1$ and $F_{d-1}$ is $(d-1)^2$ (distinct) points, so the set of zeros of $(F_1,\dots,F_{d-1})$ is their complete intersection.
} 
\fi

The  polynomial $F_\ell \in k[A,B]$  is 
\begin{equation}\label{Fl}
F_{\ell} = \sum_{\sigma \subseteq\{1,\dots,d-2\}, |\sigma| = \ell-1} A\prod_{i\in \sigma}(a_iA+c_i) \prod_{i\notin \sigma}(b_iB+d_i )+ B \prod_{i\in \sigma} (e_iA + g_i) \prod_{i\notin \sigma}(f_iB + h_i).
\end{equation}
When $1<\ell<d-1$, using the fact that $F_\ell$ does not have an $A^{d-1}$ or a $B^{d-1}$ term,  it can be shown that $F_\ell$ is the trivial linear combination of $F_1$ and $F_{d-1}$---that is, $F_\ell=0$. (For example, homogenize
$F_\ell, F_1$ and $F_{d-1}$ to  degree $d-1$ polynomials in $k[A, B, C]$ and observe that $F_1$ and $F_{d-1}$ cut out $(d-1)^2$ points in the affine chart of $\mathbb P^2$ where $C\neq 0$.)
 \ifIdiotsGuide
\textcolor{teal}{{\sc In detail: } We can homogenize  $F_1, \dots, F_{d-1}$ to get degree $d-1$ polynomials $\tilde F_1, \dots, \tilde F_{d-1}$ in $k[A, B, C]$. Since $(F_1, F_{d-1})$ cuts out a reduced scheme of $(d-1)^2$ points in $\mathbb A^2$,  the degree $d-1$ polynomials $\tilde F_1, \tilde F_{d-1}$  cut out $(d-1)^2$  distinct points in $\mathbb P^2$.
  If $F_\ell\in (F_1, F_{d-1})$, then for some $n$,
 $$
 C^n\tilde F_\ell\in (\tilde F_1, \tilde F_{d-1}) = \bigcap_{j=1}^{(d-1)^2} \mathfrak P_j,
 $$
where the $\mathfrak P_j$ are the homogeneous prime ideals of the $(d-1)^2$ points. Since those points all lie in the affine chart $\mathbb P^2$ where $C\neq 0$, 
no $\mathfrak P_j$ contains $C$. Thus
 $
 \tilde F_\ell\in (\tilde F_1, \tilde F_{d-1}).
 $
Since $\tilde F_1,\tilde F_{d-1}$ and $ \tilde F_\ell$  are {\it homogeneous} elements of the same degree, $\tilde F_{\ell}$ must be a {\it scalar} combination of $\tilde F_1, \tilde F_{d-1}$. Specializing to $C=1$, also
 $F_\ell$ must be a {\it scalar} combination of $F_1$ and $F_{d-1}$. But now for $1<\ell <d-1$, $F_\ell$ does not have an $A^{d-1}$ or a $B^{d-1}$ term,  so it must be the trivial linear combination of $F_1$ and $F_{d-1}$, i.e., $F_\ell$ must be the zero polynomial.
} 
\fi

We now relate the coefficients of the monomials in $A$ and $B$ in $F_\ell$ to the coefficients of the monomials of $x,y,z,w$ in $f$.
Write $f$ as 
\[f = \sum_{j+k+m+n=d} a_{jkmn}x^jy^kz^mw^n.\]
In this notation, we have 
\[\bar{f} = \sum_{j+k+m+n=d} a_{jkmn}(Az)^j(Bw)^kz^mw^n = \sum_{j+k+m+n=d} a_{jkmn}A^jB^kz^{j+m}w^{k+n}.\]
As we have defined them, $F_\ell$ is the coefficient of $z^\ell w^{d-\ell}$ in $\bar{f}$.
In other words, 
\[F_\ell = \sum_{j+m=\ell,k+n=d-\ell}  a_{jkmn}A^jB^k\in k[A, B].\]
Thus,  the coefficient of $A^jB^k$ in the polynomial $F_\ell$ is $a_{jkmn}$, for $\ell = j+m$ .
\ifIdiotsGuide 
\textcolor{teal}{
(Note that although $n$ does not explicitly appear, it is computed as $d-\ell-k$.)
}
\fi
Since $F_\ell$ is the zero polynomial in $A$ and $B$ for $1<\ell<d-1$, we see that   $ a_{jkmn} = 0$ unless $j+m=1$ or $j+m=d-1$.
In other words, $f$ only has monomials of the form 
\begin{equation}\label{set1}
\{xy^kw^{d-k-1},y^kzw^{d-k-1},x^{j}yz^{d-j-1},x^{j}z^{d-j-1}w\}.
\end{equation}

Symmetrically, if we interchange the roles of $y$ and $z$ in the preceding argument, we see that 
$f$ only has monomials of the form 
\begin{equation}\label{set2}
\{xz^mw^{d-m-1},yz^mw^{d-m-1},x^{j}y^{d-j-1}z,x^{j}y^{d-j-1}w\}.
\end{equation}

Comparing \cref{set1} and \cref{set2}, we see that $f$ only contains monomials of the 
\[\{xw^{d-1},x^{d-1}w, xy^{d-2}w,xz^{d-2}w, yz^{d-1},y^{d-1}z, x^{d-2}yz,yzw^{d-2}\}.\]
Thus, we can write  
\begin{equation}\label{12}
f = xw(A_1x^{d-2}+A_2y^{d-2}+A_3z^{d-2}+A_4w^{d-2})+yz (B_1x^{d-2}+B_2y^{d-2}+B_3z^{d-2}+B_4w^{d-2})
\end{equation}
for some scalars $A_i, B_i$.
Since we can also write 
\begin{equation}\label{13}
f=xw\prod_{i=1}^{d-2}(a_ix+b_iy+c_iz+d_iw)+yz \prod_{i=1}^{d-2}(e_ix+f_iy+g_iz+h_iw),
\end{equation}
and no term on the left hand side of \eqref{12} is divisible by $yz$, we can conclude that
\begin{equation}\label{poly}
\prod_{i=1}^{d-2}(a_ix+b_iy+c_iz+d_iw) = A_1x^{d-2}+A_2y^{d-2}+A_3z^{d-2}+A_4w^{d-2}.
\end{equation}

The left hand side of \eqref{poly} is a product of {\it distinct} linear factors, so by the UFD property of the polynomial ring, the same is true for the right hand side. 
Because $A_1 =\prod_{i=1}^{d-2} a_i \neq 0$ and $A_4=\prod_{i=1}^{d-2} d_i\neq 0$ (\cref{zerofgad}), we claim this forces  $A_2=A_3=0$. Indeed otherwise 
 $A_1x^{d-2}+A_2y^{d-2}+A_3z^{d-2}+A_4w^{d-2}$ is a sum of at least three non-zero diagonal terms, which is either irreducible or a power of an irreducible polynomial (the latter only in the case that 
$d-2$ is divisible by the characteristic of the ground field),  rather than a product of distinct linear factors.
Symmetrically, because $B_2, B_3\neq 0$, we know  $B_1=B_4=0$.
Finally, scaling the coordinates appropriately, we can assume that 
\begin{equation}\label{almost}
f  = x^{d-1}w+xw^{d-1}+y^{d-1}z+yz^{d-1}.\
\end{equation}
\ifIdiotsGuide
{\color{teal}{
Indeed, scaling $x$ by $x \mapsto \alpha x$ where $\alpha^{d-2} = \frac{A_4}{A_1}$ we get
 \[f  = \alpha A_4(x^{d-1}w+xw^{d-1}) + B_2y^{d-1}z+B_3yz^{d-1}.\]
A similar scaling on $y$ sends our polynomial to 
\[
f  = \alpha A_4(x^{d-1}w+xw^{d-1}+y^{d-1}z+yz^{d-1}).\
\]
}}
\fi

Since we have already seen that $d-1$ is a power of $p$ for {\it some} prime $p$ (\cref{has num star chord2}), the surface $S = \mathbb{V}(f)$ would be extremal\footnote{see \cref{extremaldef} and the discussion thereafter} and the proof would be complete {\it  if that prime $p$ is equal to the characteristic of $k$.} This remains to be shown.

To this end, we consider the lines of $S=\mathbb V(x^{d-1}w+xw^{d-1}+y^{d-1}z+yz^{d-1})$ properly intersecting the line $L = \mathbb{V}(z,w)$. We know that there are $(d-1)(d^2-2d+2)$ lying on $S$ and intersecting $L$.
Any  line $M$ intersecting $L$ has defining equations
\begin{equation}\label{lines-7}
\begin{aligned}
a_1x-a_2y-a_3z - a_4w &=0\\
b_3z - b_4w&=0\\
\end{aligned}
\end{equation}
since these linear forms, together with $\{z, w\}$, must be linearly dependent. The case where $b_3=0$ produces the $d$ lines of the plane section $S\cap \mathbb V(w)$, including $L$ itself, and the case where  $b_4=0$ produces the $d$ lines of the plane section $S\cap \mathbb V(z)$, including $L$ itself. 
Thus, there are $(d-1)(d^2-2d+2)-2(d-1)= d(d-1)(d-2)$ {\it remaining} lines  $M$ on $S$ of the form 
\begin{equation}\label{lines-8}
\begin{aligned}
a_1x-a_2y\,\,\,\,\,\,\,\,\,\,\,\, - a_4w &=0\\
b_3z - b_4w&=0\\
\end{aligned}
\end{equation}
where $b_3,b_4\neq 0$
on $S$.
Assuming $M$ is in neither plane $\mathbb V(w)$ nor $\mathbb V(z)$, note that neither $a_1$ nor $a_2$ is zero:  if $a_1=0$, the lines  $M$, $ \mathbb V(z, w)$, and $ \mathbb V(y, w)$ all pass through the point $[1:0:0:0]$,
 forcing them to be coplanar (\cref{threelines}), so that $M$ would be in the plane $\mathbb V(w)$. Likewise, if 
 $a_2= 0$, the point $[0:1:0:0]$ lies on the lines $M$, $\mathbb V(x, z)$, and $\mathbb V(w, z)$ forcing $M$ to be in the plane $\mathbb V(z)$. 
  Without loss of generality, we assume $a_1=b_3=1$. 

Now, consider the restriction of forms to the line $M$, which we view as
the map 
\[
k[x, y, z, w] \rightarrow k[y, w] \,\,\,\,\,\, \,\,\,\,\,\ x\mapsto a_2y+a_4w,\,\,\,z \mapsto b_4w.
\]
The polynomial $f$ \eqref{almost} restricts to 
\begin{equation}\label{f-bar2}
\bar{f}  = (a_2y+a_4w)^{d-1}w+(a_2y+a_4w)w^{d-1}+y^{d-1}(b_4w)+y(b_4w)^{d-1},
\end{equation}
which is the zero polynomial if and only if the line \eqref{lines-8} lies on $S$.
Because there are $d(d-1)(d-2)$ lines $M$ on $S$ of the form \eqref{lines-8}, there must be $d(d-1)(d-2)$ triples $(a_2, a_4, b_4)$ where $\bar f$ is the zero polynomial.
But when $d>3$,  the coefficient of $y^{d-2}w^2$  in \eqref{f-bar2} is the polynomial 
 \[
(d-1) a_2^{d-2}a_4.
\]
Since $a_2\neq 0$, either $a_4$ or $d-1$ is zero in the field $k$.
If $a_4=0$, then $\bar f = (a_2^{d-1}+b_4) y^{d-1}w + (a_2+b_4^{d-1}) yw^{d-1}$. 
Setting those two coefficients to zero, we see there are at most $(d-1)^2$ such lines.
Since there are $d(d-1)(d-2)$ lines on $S$ of the form \eqref{lines-8}, there must be some such lines with $a_4\neq0$, in which case $(d-1)$ must be zero in the field $k$.
In other words, the characteristic of $k$ must divide $d-1=p^e$. We conclude that $k$ has characteristic $p$,  completing the proof. 
\end{proof}
\bibliographystyle{amsalpha}
\bibliography{Citations}

\newcommand{\etalchar}[1]{$^{#1}$}
\providecommand{\bysame}{\leavevmode\hbox to3em{\hrulefill}\thinspace}
\providecommand{\MR}{\relax\ifhmode\unskip\space\fi MR }
% \MRhref is called by the amsart/book/proc definition of \MR.
\providecommand{\MRhref}[2]{%
  \href{http://www.ams.org/mathscinet-getitem?mr=#1}{#2}
}
\providecommand{\href}[2]{#2}
\begin{thebibliography}{KKP{\etalchar{+}}22}

\bibitem[BC66]{Bose-Chakravarti}
R.~C. Bose and I.~M. Chakravarti, \emph{Hermitian varieties in a finite
  projective space {${\rm PG}(N,\,q^{2})$}}, Canadian J. Math. \textbf{18}
  (1966), 1161--1182. \MR{200782}

\bibitem[Bea90]{Beauville.90}
A.~Beauville, \emph{Sur les hypersurfaces dont les sections hyperplanes sont
  \`a module constant}, The {G}rothendieck {F}estschrift, {V}ol. {I}, Progr.
  Math., vol.~86, Birkh\"{a}user Boston, Boston, MA, 1990, With an appendix by
  David Eisenbud and Craig Huneke, pp.~121--133. \MR{1086884}

\bibitem[BPRS24]{Brosowsky+Page+Ryan+Smith}
Anna Brosowsky, Janet Page, Tim Ryan, and Karen~E. Smith, \emph{Geometry of
  smooth extremal surfaces}, J. Algebra \textbf{646} (2024), 376--411.
  \MR{4711041}

\bibitem[BR23]{Bauer+Rams}
Thomas Bauer and S{\l}awomir Rams, \emph{Counting lines on projective
  surfaces}, Annali della Scuola Normale Superiore di Pisa. Classe di Scienze.
  Serie V \textbf{24} (2023), no.~3, 1285--1299.

\bibitem[BS07]{Boissiere+Sarti}
Samuel Boissi\`ere and Alessandra Sarti, \emph{Counting lines on surfaces},
  Ann. Sc. Norm. Super. Pisa Cl. Sci. (5) \textbf{6} (2007), no.~1, 39--52.
  \MR{2341513}

\bibitem[CC10]{Cools-Coppens}
Filip Cools and Marc Coppens, \emph{Star points on smooth hypersurfaces}, J.
  Algebra \textbf{323} (2010), no.~1, 261--286. \MR{2564838}

\bibitem[Che23a]{Che23a}
Raymond Cheng, \emph{{q}-bic forms}, preprint (2023).

\bibitem[Che23b]{Che23b}
\bysame, \emph{{q}-bic hypersurfaces and their {F}ano schemes}, preprint
  (2023).

\bibitem[CHM95]{Caporaso+Harris+Mazur}
Lucia Caporaso, Joe Harris, and Barry Mazur, \emph{How many rational points can
  a curve have?}, The moduli space of curves ({T}exel {I}sland, 1994), Progr.
  Math., vol. 129, Birkh\"auser Boston, Boston, MA, 1995, pp.~13--31.
  \MR{1363052}

\bibitem[Cle61]{Clebsch}
A.~Clebsch, \emph{Zur {T}heorie der algebraischen {F}l\"achen}, J. Reine Angew.
  Math. \textbf{58} (1861), 93--108. \MR{1579142}

\bibitem[Deg15]{Degtyarev.15}
Alex Degtyarev, \emph{Lines generate the {P}icard groups of certain {F}ermat
  surfaces}, J. Number Theory \textbf{147} (2015), 454--477. \MR{3276333}

\bibitem[Deg22]{Degtyarev22}
\bysame, \emph{Lines in supersingular quartics}, J. Math. Soc. Japan
  \textbf{74} (2022), no.~3, 973--1019. \MR{4484237}

\bibitem[DIS17]{Degtyarev+Itenberg+Sertoz}
Ali~Sinan Degtyarev-Itenberg-Sertoz, \emph{Lines on quartic surfaces}, Math.
  Ann. \textbf{368} (2017), no.~1-2, 753--809. \MR{3651588}

\bibitem[DL76]{Deligne+Lusztig}
P.~Deligne and G.~Lusztig, \emph{Representations of reductive groups over
  finite fields}, Ann. of Math. (2) \textbf{103} (1976), no.~1, 103--161.
  \MR{393266}

\bibitem[DR23]{Degtyarev+Rams}
Alex Degtyarev and Sławomir Rams, \emph{Lines on k3-quartics via triangular
  sets}, 2023.

\bibitem[Ful98]{Fulton}
William Fulton, \emph{Intersection theory}, second ed., Ergebnisse der
  Mathematik und ihrer Grenzgebiete. 3. Folge. A Series of Modern Surveys in
  Mathematics [Results in Mathematics and Related Areas. 3rd Series. A Series
  of Modern Surveys in Mathematics], vol.~2, Springer-Verlag, Berlin, 1998.
  \MR{1644323}

\bibitem[GAR16]{Gonzalez+Rams}
V\'{\i}ctor Gonz\'{a}lez-Alonso and S{\l}awomir Rams, \emph{Counting lines on
  quartic surfaces}, Taiwanese J. Math. \textbf{20} (2016), no.~4, 769--785.
  \MR{3535673}

\bibitem[Har95]{Harris}
Joe Harris, \emph{Algebraic geometry}, Graduate Texts in Mathematics, vol. 133,
  Springer-Verlag, New York, 1995, A first course, Corrected reprint of the
  1992 original. \MR{1416564}

\bibitem[Hef85]{Hef85}
Abramo Hefez, \emph{D{UALITY} {FOR} {PROJECTIVE} {VARIETIES}}, ProQuest LLC,
  Ann Arbor, MI, 1985, Thesis (Ph.D.)--Massachusetts Institute of Technology.
  \MR{2941091}

\bibitem[Hir85]{Hirschfeld}
J.~W.~P. Hirschfeld, \emph{Finite projective spaces of three dimensions},
  Oxford Mathematical Monographs, The Clarendon Press, Oxford University Press,
  New York, 1985, Oxford Science Publications. \MR{840877}

\bibitem[HK13]{Homma-Kim.13}
Masaaki Homma and Seon~Jeong Kim, \emph{An elementary bound for the number of
  points of a hypersurface over a finite field}, Finite Fields Appl.
  \textbf{20} (2013), 76--83. \MR{3015353}

\bibitem[HK15]{Homma-Kim.15}
\bysame, \emph{Numbers of points of surfaces in the projective 3-space over
  finite fields}, Finite Fields Appl. \textbf{35} (2015), 52--60. \MR{3368799}

\bibitem[HK16]{Homma-Kim.16}
\bysame, \emph{The characterization of {H}ermitian surfaces by the number of
  points}, J. Geom. \textbf{107} (2016), no.~3, 509--521. \MR{3563207}

\bibitem[HW36]{Hasse-Witt}
Helmut Hasse and Ernst Witt, \emph{Zyklische unverzweigte
  {E}rweiterungsk\"orper vom {P}rimzahlgrade {$p$} \"uber einem algebraischen
  {F}unktionenk\"orper der {C}harakteristik {$p$}}, Monatsh. Math. Phys.
  \textbf{43} (1936), no.~1, 477--492. \MR{1550551}

\bibitem[Igu60]{Igusa}
Jun-ichi Igusa, \emph{Betti and {P}icard numbers of abstract algebraic
  surfaces}, Proc. Nat. Acad. Sci. U.S.A. \textbf{46} (1960), 724--726.
  \MR{118725}

\bibitem[KKP{\etalchar{+}}22]{LowerBoundsExtremal}
Zhibek Kadyrsizova, Jennifer Kenkel, Janet Page, Jyoti Singh, Karen~E. Smith,
  Adela Vraciu, and Emily~E. Witt, \emph{Lower bounds on the {$F$}-pure
  threshold and extremal singularities}, Trans. Amer. Math. Soc. Ser. B
  \textbf{9} (2022), 977--1005. \MR{4498775}

\bibitem[KLOS23]{Kollar+Lieblich+Olsson+Sawin}
J\'{a}nos Koll\'{a}r, Max Lieblich, Martin Olsson, and Will Sawin, \emph{What
  determines an algebraic variety?}, Annals of Mathematics Studies, vol. 216,
  Princeton University Press, Princeton, NJ, [2023] \copyright 2023.
  \MR{4648834}

\bibitem[Kol15]{Kollar}
J\'{a}nos Koll\'{a}r, \emph{Szemer\'{e}di-{T}rotter-type theorems in dimension
  3}, Adv. Math. \textbf{271} (2015), 30--61. \MR{3291856}

\bibitem[KP91]{Kleimann+Piene}
Steven Kleiman and Ragni Piene, \emph{On the inseparability of the {G}auss
  map}, Enumerative algebraic geometry ({C}openhagen, 1989), Contemp. Math.,
  vol. 123, Amer. Math. Soc., Providence, RI, 1991, pp.~107--129. \MR{1143550}

\bibitem[Lan56]{Lan56}
S.~Lang, \emph{Algebraic groups over finite fields}, Amer. J. Math. \textbf{78}
  (1956), 555--563. \MR{86367}

\bibitem[Mat57]{Matsusaka}
T.~Matsusaka, \emph{The criteria for algebraic equivalence and the torsion
  group}, Amer. J. Math. \textbf{79} (1957), 53--66. \MR{82730}

\bibitem[Miy09]{Miyaoka}
Yoichi Miyaoka, \emph{Counting lines and conics on a surface}, Publ. Res. Inst.
  Math. Sci. \textbf{45} (2009), no.~3, 919--923. \MR{2569571}

\bibitem[Nom95]{Noma}
Atsushi Noma, \emph{Hypersurfaces with smooth dual varieties}, Amer. J. Math.
  \textbf{117} (1995), no.~6, 1507--1515. \MR{1363077}

\bibitem[PT09]{PayneThasBook}
Stanley~E. Payne and Joseph~A. Thas, \emph{Finite generalized quadrangles},
  second ed., EMS Series of Lectures in Mathematics, European Mathematical
  Society (EMS), Z\"{u}rich, 2009. \MR{2508121}

\bibitem[RS14]{Rams+Schutt.14}
S{\l}awomir Rams and Matthias Sch\"utt, \emph{On quartics with lines of the
  second kind}, Adv. Geom. \textbf{14} (2014), no.~4, 735--756. \MR{3276131}

\bibitem[RS15a]{Rams+Schutt.15-112lines}
S{\l}awomir Rams and Matthias Sch\"{u}tt, \emph{112 lines on smooth quartic
  surfaces (characteristic 3)}, Q. J. Math. \textbf{66} (2015), no.~3,
  941--951. \MR{3396099}

\bibitem[RS15b]{Rams+Schutt.15-64lines}
\bysame, \emph{64 lines on smooth quartic surfaces}, Math. Ann. \textbf{362}
  (2015), no.~1-2, 679--698. \MR{3343894}

\bibitem[RS18]{Rams+Schutt.18-AtMost64linesChar2}
\bysame, \emph{At most 64 lines on smooth quartic surfaces (characteristic 2)},
  Nagoya Math. J. \textbf{232} (2018), 76--95. \MR{3866501}

\bibitem[RS20]{Rams+Schutt.20-Quintics}
S{\l}awomir Rams and Matthias Sch\"utt, \emph{Counting lines on surfaces,
  especially quintics}, Ann. Sc. Norm. Super. Pisa Cl. Sci. (5) \textbf{20}
  (2020), no.~3, 859--890. \MR{4166795}

\bibitem[Sch82]{Schur}
Friedrich Schur, \emph{Ueber eine besondre {C}lasse von {F}l\"achen vierter
  {O}rdnung}, Math. Ann. \textbf{20} (1882), no.~2, 254--296. \MR{1510168}

\bibitem[Seg43a]{Segre43}
B.~Segre, \emph{The maximum number of lines lying on a quartic surface}, Quart.
  J. Math. Oxford Ser. \textbf{14} (1943), 86--96. \MR{10431}

\bibitem[Seg43b]{Segre.43}
\bysame, \emph{The maximum number of lines lying on a quartic surface}, Quart.
  J. Math. Oxford Ser. \textbf{14} (1943), 86--96. \MR{10431}

\bibitem[Seg65]{Segre.65}
Beniamino Segre, \emph{Forme e geometrie hermitiane, con particolare riguardo
  al caso finito}, Ann. Mat. Pura Appl. (4) \textbf{70} (1965), 1--201.
  \MR{213949}

\bibitem[Seg67]{Segre}
\bysame, \emph{Introduction to {G}alois geometries}, Atti Accad. Naz. Lincei
  Mem. Cl. Sci. Fis. Mat. Natur. Sez. Ia (8) \textbf{8} (1967), 133--236.
  \MR{238846}

\bibitem[SGA73]{SGAII}
\emph{Groupes de monodromie en g\'{e}om\'{e}trie alg\'{e}brique. {II}}, Lecture
  Notes in Mathematics, vol. Vol. 340, Springer-Verlag, Berlin-New York, 1973,
  S\'{e}minaire de G\'{e}om\'{e}trie Alg\'{e}brique du Bois-Marie 1967--1969
  (SGA 7 II), Dirig\'{e} par P. Deligne et N. Katz. \MR{354657}

\bibitem[Shi01]{Shi01}
Ichiro Shimada, \emph{Lattices of algebraic cycles on {F}ermat varieties in
  positive characteristics}, Proc. London Math. Soc. (3) \textbf{82} (2001),
  no.~1, 131--172. \MR{1794260}

\bibitem[SK79]{Shioda+Katsura}
Tetsuji Shioda and Toshiyuki Katsura, \emph{On {F}ermat varieties}, Tohoku
  Math. J. (2) \textbf{31} (1979), no.~1, 97--115. \MR{526513}

\bibitem[SS17]{Shimada+Shioda}
Ichiro Shimada and Tetsuji Shioda, \emph{On a smooth quartic surface containing
  56 lines which is isomorphic as a {$K3$} surface to the {F}ermat quartic},
  Manuscripta Math. \textbf{153} (2017), no.~1-2, 279--297. \MR{3635983}

\bibitem[SSvL10]{Schutt+Shioda+vanLuijk}
Matthias Sch\"utt, Tetsuji Shioda, and Ronald van Luijk, \emph{Lines on
  {F}ermat surfaces}, J. Number Theory \textbf{130} (2010), no.~9, 1939--1963.
  \MR{2653207}

\bibitem[Tha78]{Thas78}
J.~A. Thas, \emph{Combinatorial characterizations of generalized quadrangles
  with parameters {$s=q$} and {$t=q\sp{2}$}}, Geom. Dedicata \textbf{7} (1978),
  no.~2, 223--232. \MR{485452}

\bibitem[Tha94]{Thas-94}
\bysame, \emph{Generalized quadrangles of order {$(s,s^2)$}. {I}}, J. Combin.
  Theory Ser. A \textbf{67} (1994), no.~2, 140--160. \MR{1284404}

\bibitem[Tit59]{Tits}
Jacques Tits, \emph{Sur la trialit\'{e} et certains groupes qui s'en
  d\'{e}duisent}, Inst. Hautes \'{E}tudes Sci. Publ. Math. (1959), no.~2,
  13--60. \MR{1557095}

\bibitem[Ven17a]{Veniani-Char2}
Davide~Cesare Veniani, \emph{Lines on {K}3 quartic surfaces in characteristic
  2}, Q. J. Math. \textbf{68} (2017), no.~2, 551--581. \MR{3667213}

\bibitem[Ven17b]{Veniani-K3Quartic}
\bysame, \emph{The maximum number of lines lying on a {K}3 quartic surface},
  Math. Z. \textbf{285} (2017), no.~3-4, 1141--1166. \MR{3623744}

\bibitem[Ven20]{Veniani-QuarticSymmetries}
\bysame, \emph{Symmetries and equations of smooth quartic surfaces with many
  lines}, Rev. Mat. Iberoam. \textbf{36} (2020), no.~1, 233--256. \MR{4061988}

\bibitem[Ven22]{Veniani-Char3}
\bysame, \emph{Lines on {K}3 quartic surfaces in characteristic 3}, Manuscripta
  Math. \textbf{167} (2022), no.~3-4, 675--701. \MR{4385387}

\bibitem[Xia19]{Xiao}
Stanley~Yao Xiao, \emph{On binary cubic and quartic forms}, J. Th\'eor. Nombres
  Bordeaux \textbf{31} (2019), no.~2, 323--341. \MR{4030910}

\end{thebibliography}
\end{document}